\newcommand{\bt}{\begin{Theorem}}
\newcommand{\et}{\end{Theorem}}
\newcommand{\bi}{\begin{itemize}}
\newcommand{\ei}{\end{itemize}}
\newcommand{\bea}{\begin{eqnarray}}
\newcommand{\ba}{\begin{array}}
\newcommand{\eea}{\end{eqnarray}}
\newcommand{\ea}{\end{array}}
\newcommand{\lgra}{\longrightarrow}
\newcommand{\w}{\omega}
\newtheorem{Definition}{Definition}[section]
\newtheorem{Theorem}[Definition]{Theorem}
\newtheorem{Lemma}[Definition]{Lemma}
\newtheorem{Corollary}[Definition]{Corollary}
\newtheorem{Example}[Definition]{Example}
\newtheorem{Remark}[Definition]{Remark}
\newcommand{\be}{\begin{equation}}
\newcommand{\ee}{\end{equation}}
\newcommand{\E}{\mathbb E}
\newcommand{\R}{\mathbb R}%
\newcommand{\C}{\mathbb C}%
\renewcommand{\Re}{\mbox{Re }}
\begin{document}
\baselineskip14pt
\author[A. Kumar]{Ashisha Kumar}
\address[Ashisha Kumar]{Department of mathematics, Indian
Institute of Science, Bangalore 560012, Karnatka, India, E-mail:
ashisha10@math.iisc.ernet.in}

\author[Ray, S. K.]{Swagato K. Ray}
\address[Swagato K. Ray]{Stat-math Unit, Indian Statistical Institute, 203 B. T. Road, Kolkata 700108,
India, E-mail:skray@iitk.ac.in}

\title[End point estimates for Radon transform]{End point estimates for Radon transform of radial functions on Non-Euclidean spaces}
\subjclass[2000]{Primary 44A12, 42B20; Secondary 31B99}
\keywords{Radon transform, $d\mbox{-}$plane transform, Lorentz
Spaces, Hyperbolic space, Sphere, Affine Grassmann}

\begin{abstract} We prove end point estimate for Radon transform of radial functions on affine Grasamannian and real hyperbolic space.
 We also discuss analogs of these results on the sphere.
 \end{abstract}
\maketitle

\section{Introduction}
Given a function $f$ on $\R^n$, $n\geq 2$, the Radon transform of
$f$ is defined by the formula
\be
 Rf(\w,t)=\int_{\R^{n-1}}f(t\w+y^{\prime}) \ d
y^{\prime},~\quad  \w \in S^{n-1},~ t \in \R, \ee whenever the above
integral makes sense. One can analogously define the $d\mbox{-}$plane
transform of a function as the integral of the function over
$d\mbox{-}$dimensional planes with respect to the
$d\mbox{-}$dimensional Lebesgue measure. Precisely, given $1 \leq d
\leq n-1$ and a $d\mbox{-}$dimensional linear subspace $\Pi$ of
$\R^n,$ one defines
\begin{equation}
T_df(x,\Pi)=\int_{\Pi}f(x-y)\ d\lambda_{d}(y),\label{dplane definition} \end{equation} where
$\lambda_{d}$ denotes the $d\mbox{-}$dimensional Lebesgue measure on
$\Pi$. By a simple application of Fubini's theorem one can see that
the above  integrals converge absolutely for $f\in L^p(\R^n)$, if $p=1$. But for $p>1$ the
situation is rather involved. It was proved in \cite{S} that if
$f\in L^p(\R^n)$, $1\leq p<\frac{n}{d}$, then the lower dimensional
integral (\ref{dplane definition}) is well defined for allmost every $x$ and $\Pi$.
Roughly speaking, the above phenomenon occurs because the $d\mbox{-}$plane
transform is closely related to the Fourier transform and the Reisz potential. A natural question then
 is to ask: If $f\in L^p(\R^n)$, $1 \leq p<\frac{n}{d}$, then what
 can one say about the size of $T_df?$ Of course the size has to be
 measured with respect to the natural measure on the set of $d\mbox{-}$dimensional affine subspaces.
 The above problem was completely settled for $d=n-1$ in \cite{OS}.
It was shown in \cite{OS} that there exists a positive constant $C$
such that for all $f \in C_c^{\infty}(\R^n )$ the following
inequality holds,
\be\left(\int_{S^{n-1}}\left[\int_{\R}|Rf(\w,t)|^qdt
\right]^{p^{\prime}/q} d\sigma_{n-1}(\w) \right)^{1/p^{\prime}} \leq
C \|f\|_{L^p(\R^n)}~,\label{ossynp} \ee $1 \leq p < n/(n-1)$, $
1/q=(n/p)-n+1$, $1/p'=1-(1/p)$. Here $\sigma_{n-1}$ denotes the
normalized rotation invariant measure on the sphere.

Apart from the above result  another intriguing observation related
to the end-point estimate of (\ref{ossynp}) involving Lorentz norm was made in
\cite{OS}. It was shown in \cite{OS} that the end-point estimate of
(\ref{ossynp}) holds for $n\geq 3,$ that is, \be
\left(\int_{S^{n-1}} (\mbox{sup}_{t\in\R}~|Rf(\w,t)|)^n
 \ d\sigma_{n-1}(\w)\right)^{1/n} \leq C_n \|f\|_{L^{\frac{n}{n-1},1}(\R^n)}
.\ee But the above estimate fails for the case $n=2$. A consequence
of the above is that if $f\in L^{\frac{n}{n-1},1}(\R^n), n\geq 3$ then its Radon
transform is well defined for almost every hyperplane. The failure
of the end-point estimate for $n=2$ can be attributed to the
existence of compact Kakeya sets in $\R^2$ of arbitrary small
Lebesgue measure (\cite{OS}, p. $642$). Since the indicator function
of a set with radial symmetry and arbitrarily small
$L^{2,1}$ norm cannot contain line segments of a fixed length in every direction, one can still hope for an end-point estimate for the
radial functions in the case $n=2$. This viewpoint was adopted in
\cite{DNO} and the question regarding the end-point estimate for the
$d\mbox{-}$plane transform of radial functions was answered: if $1 \leq d \leq n-1$ then for all radial
functions $f$ the following estimate holds,
\begin{equation} \mbox{sup}_{(x,\Pi)\in \R^n\times G_{n,d}}|T_df(x,\Pi)|\leq
C\|f\|_{L^{\frac{n}{d},1}(\R^n)}.\label{DNOsynp}
\end{equation}
Here $G_{n,d}$ stands for the set of $d\mbox{-}$dimensional linear
subspaces of $\R^n$ and $C>0$ is a constant independent of $f$. In
this paper our main objective is to establish analogs of
(\ref{DNOsynp}) on affine Grassmannian, real hyperbolic space and
the sphere.

There is a wealth of information available in the literature on the study of $d\mbox{-}$plane transform on affine Grassmannian
 (see for instance \cite{Go,GK1,GK2,He1,Str1}). However, the study of $d\mbox{-}$plane transform of $L^p$ functions on
affine Grassmannian seems to be relatively new.
Investigation regarding $L^p-L^q$ mapping property of the $d$-plane
transform for affine Grassmannian was recently initiated in \cite{R5}.
This notion of $d$-plane transform generalizes the notion of $d$-plane transform of functions
defined earlier. This motivated us to look for an estimate similar to (\ref{DNOsynp})
for the $d\mbox{-}$plane transform of radial functions on affine Grassmannian. Our main result in section $2$ shows that it is possible to
prove an appropriate analogue of (\ref{DNOsynp}) for generalized $d$-plane
transform of radial functions on affine Grassmannian (see Theorem \ref{grast}).

Next we turn towards the real hyperbolic space. There are several
papers which deal with $L^p-L^q$ mapping property
of the totally geodesic radon transform on real hyperbolic spaces
(see for instance \cite{BR1, BR2, Is, Str2} and references there
in). But none of these papers address the question of end point
estimates for the Radon transform. The best possible $L^p-L^q$
mapping property of the Radon transform (or more generally the
$d$-plane transform) on these spaces seem to be a hard problem.
However, it turns out that the same problem on the restricted class
of radial functions is not very difficult to tackle. In fact, analogues of (\ref{DNOsynp})
has already appeared in the non Euclidean setup. The first one is by
Cowling, Meda and Setti \cite{CMS}. While working on Kunze-Stein
phenomena on homogeneous trees they proved that the horospherical
Radon transform of radial functions defines a continuous operator
from $L^{2,1}$ to $L^{\infty}$ (\cite{CMS}, Theorem $2.5$). The
second result is by Ionescu \cite{Io}. It was shown in \cite{Io}
that for rank one Riemannian symmetric spaces of non compact type
the horospherical Radon transform of radial functions is also continuous
from $L^{2,1}$ to $L^{\infty}$ (\cite{Io}, Proposition $2$).

These results motivated us to consider the $d\mbox{-}$dimensional
totally geodesic Radon transform of radial functions on real
hyperbolic space $\mathbb H^n$ and the sphere $S^n$. A result in
\cite{BR1} says that: If $f\in L^p(\mathbb H^n)$, $1\leq p
<(n-1)/(d-1)$, then $f$ is integrable over almost every
$d\mbox{-}$dimensional totally geodesic submanifold. In analogy with
(\ref{DNOsynp}) it is natural to enquire about the validity of
the end-point estimate only for radial functions. We
answer the question in Section $3$ as follows (see Theorem
\ref{endabel}): the $d$-plane transform restricted to the class of radial functions defines a
continuous linear map from $L^{\frac{n-1}{d-1},1}(\mathbb H^n)$ to
$L^{\infty}(\R^+)$ if $n\geq 3$. As a consequence it follows that if $f$
is radial and $f \in L^{\frac{n-1}{d-1},1}(\mathbb H^n)$, $n\geq 3$, then $f$ is
integrable over almost every $d\mbox{-}$dimensional totally geodesic
submanifold. This is in the same spirit as $\R^n$, homogeneous trees
and rank one symmetric spaces of non compact type. However there are some non Euclidean
consequences of the above result (see Corollary \ref{coroh}). One such is that if $f \in
L^{\frac{n-1}{d-1},1}(\mathbb H^n)$ and is radial then its Radon
transform has an exponential decay at infinity. As a consequence we will see
that the $d\mbox{-}$dimensional totally geodesic Radon
transform of radial functions is also continuous from
$L^{\frac{n-1}{d-1},1}(\mathbb H^n)$ to $L^{n-1,\infty}(\R^+).$ This is
in sharp contrast with the Euclidean spaces.

In Section $4$ we consider the case of the sphere. The situation here is
very different because of compactness. It is known from \cite{R3}
that in this case the $d$-plane transform is continuous from $L^p(S^n)$ to
$L^p(SO(n+1)/SO(n-d)\times SO(d+1)),$ $1\leq p\leq \infty$ (see also
\cite{Str2}). Here the quotient space $SO(n+1)/SO(n-d)\times
SO(d+1)$ is viewed as the space of $d\mbox{-}$dimensional totally
geodesic submanifolds of $S^n.$ In this case one can show that the
exact analogue of (\ref{DNOsynp}), for functions which are invariant
under the action of $SO(n),$ is not true (Example \ref{examsphere}).
It turns out that one can prove a result analogous to
(\ref{DNOsynp}) if the $SO(n+1)$ invariant measure on
$SO(n+1)/SO(n-d)\times SO(d+1)$ is considered along with a weight
which is naturally associated with the structure of the set of
$d\mbox{-}$dimensional totally geodesic submanifolds (see Theorem
\ref{endabelsph}).

\section[Affine Grassmannian]{Affine Grassmannian}
\subsection[Notation and Preliminaries]{Notation and Preliminaries}
Let $G_{n,k}$ be the standard Grassmann manifold of all
$k\mbox{-}$dimensional linear subspaces of $\R^n,~0\leq k <n.$
The rotation group $SO(n)$ acts on $G_{n,k}$ by $A\cdot \xi=A(\xi), A\in SO(n), \xi \in G_{n,k}.$
The above action can easily be seen to be transitive and moreover the isotropy subgroup at
$\xi_{0}=\mbox{span }\{e_1,\ldots, e_{k}\}$ is $SO(k)\times SO(n-k)$ (see \cite{M}, p. 140).
Consequently we can identify $G_{n,k}$ with the compact homogeneous space $SO(n)/SO(k)\times SO(n-k).$
This identification allows us to talk about the $SO(n)$ invariant measure on $G_{n,k}$ with total mass $1$.
This measure will be denoted by $d\xi$.
Let $G(n,k)$ denotes the set of all $k\mbox{-}$dimensional
affine subspaces of $\R^n.$ Given $\xi \in G_{n,k}$ and $u \in \xi^{\perp}$ the translated plane $\xi+u \in G(n,k).$
Moreover, given any $\tau \in G(n,k)$ there exists unique $\xi \in G_{n,k}$ and unique $u\in \xi^{\perp}$ such that $\tau=\xi+u.$ Consequently,
$G(n,k)$ can be parameterized by the pair $(\xi,u)$, $\xi \in G_{n,k}$, $u\in \R^{n-k}$.
 The manifold $G(n,k)$ can also be viewed as a homogeneous space of the Euclidean
 motion group $M(n)=O(n)\times \R^n$. Given an element $(T,v) \in M(n)$ and $\tau \in G_(n,k)$ we define
 $$(T,v)\cdot \tau=\left\{\left(\begin{matrix} T & v\\
                                        0 & 1\end{matrix}\right)\left(\begin{matrix}w\\1\end{matrix}\right)| w\in \tau \right\}$$
 This action is known to be transitive. Let $\{e_1,e_2,\ldots,e_n\}$ be the standard orthonormal basis of $\R^n$
 and $\tau_k=\Re_1\oplus\ldots\oplus \Re_k$ (we assume $\tau_k=0$ if $k=0$. It then follows that the
 isotropy subgroup $H_k$ at $\tau_k$ is given by
 \bea H_k&=&\{(T,v)\in M(n) : (T,v)\cdot \tau_k=\tau_k\}\nonumber\\
 &=&\left\{\left(\begin{matrix} T1 & 0 & v\\
                                0 & T_2 & 0\\
                                0 & 0  & 1\end{matrix}\right):T_1 \in O(k), T_2\in O(n-k), v\in \tau_k \right\}.\eea
 Hence $G(n,k)\approx M(n)/(O(k)\times O(n-k))\times \R^k. $

 Let $\xi$ be a $d\mbox{-}$plane, $k<d<n$, and $\tau$ a $k\mbox{-}$plane with $\tau \subset \zeta.$ According to our convention we have
 $\zeta=(\eta,v),$ $\tau=(\xi,u)$ where $\eta \in G_{n,d},\ \xi \in G_{n,k},\ v \in \eta^{\perp}$ and $ u \in \zeta^{\perp}.$
 Since $\tau \subset \xi$ it then follows immediately that $\xi \subset \eta.$
 We claim that there exists $x\in \xi^{\perp}\cap\eta$
such that $u=v+x.$ Let $A\in \tau$ then $A=Z+u=W+v$, $ Z \in \xi$,
$W\in \eta$, $u\in \xi^{\perp}$, $v\in \eta^{\perp}$. Hence $u-v=W-Z\in \eta$.
On the other hand $\eta^{\perp}\subset \xi^{\perp}$ (as $\xi \subset \eta$) and hence $u-v \in \xi^{\perp}$.
If we choose $x=u-v$ then we are through.
We consider the product measure $d\mu_k(\tau)=d\xi du$ on $G(n,k)$,
where $d\xi$ is the measure on $G_{n,k}$ and $du$ is the $(n-k)\mbox{-}$dimensional
Lebesgue measure on $\R^{n-k}$.
For $n>d>k$ and $\eta \in G_{n,d}$, we denote $G_k(\eta)$ the Grassmann manifold of
$k\mbox{-}$dimensional linear subspaces of $\eta$. We write
$$\tau=(\xi,u)\in G(n,k), \quad u\in \xi^{\perp}; \quad \zeta=(\eta,v)\in G(n,d), \quad v\in \eta^{\perp}.$$
The $d\mbox{-}$plan transform of a function $f$ on $G(n,k)$ is a
function $R_df$ on $G(n,d)$ defined by

\be R_d f(\zeta)=\int_{\tau\subset\zeta} f(\tau)=\int_{\xi\subset
\eta}d_{\eta}\xi \int_{\xi^{\perp}\cap\eta} f(\xi,v+x) dx.\label{rg}\ee
Here $d_{\eta}\xi $ denotes the normalized measure on Grassmannian
$G_k(\eta)$ of all $k\mbox{-}$dimensional linear subspaces of
$\eta$ and $dx$ denotes the $(d-k)\mbox{-}$dimensional Lebesgue measure.

If $\tau=(\xi,u)\in G(n,k)$, the distance $\|u\|$ of the plane $\tau$ from the origin will be denoted by $|\tau|$.
A function $f$ defined on $G(n,k)$ is called radial if $f(\xi,u)=f(\xi',u')$ whenever $\|u\|=\|u'\|$.
This means that a radial functions depends only on the distance of the
$k\mbox{-}$plane from the origin. For radial functions the formula (\ref{rg}) can be written in a more concrete
form. Since $v\in \eta^{\perp}$ and $x\in \eta$ it follows from (\ref{rg}) that if $f$ is radial then
\be R_d f(\zeta)= R_d f(\eta,v)=C_{n,d,k}\int_{\R^{(d-k)}} f((\|v\|^2+\|x\|^2)^{1/2}) dx. \label{rrg}
\ee
This implies that $R_df$ is also a radial function on $G(n,d)$. By writing $\|v\|=s$ and using polar coordinates
on $\R^{d-k}$ we get the expression
 \be R_d f(\eta,v)=A_df(s)=C_{n,d,k}\int_{s}^{\infty} f(t) (t^2-s^2)^{\frac{d-k}{2}-1} t dt. \label{drg}
\ee
This formula will play a crucial role in the main result of this section.

We will need the notion of Lorentz spaces in the present as well as in the subsequent sections. We briefly recall some relevant results on Lorentz
spaces (see \cite{Graf, SW3} for details). Let $(M, m)$ be a
$\sigma$-finite measure space, $f:M\lgra \C$ be a measurable
function and $p\in [1, \infty)$, $q\in [1, \infty]$. We define
\begin{equation*}\|f\|^*_{p,q}=\begin{cases}\left(\frac qp\int_0^\infty [f^*(t)t^{1/p}]^q\frac{dt}t\right)^{1/q}
\ \textup{ when } q<\infty\\ \\ \sup_{t>0}td_f(t)^{1/p}\ \ \ \ \ \ \ \ \ \ \ \ \textup{ when }
q=\infty.\end{cases}\end{equation*}
Here $d_f$ is the distribution function of
$f$ and $f^*(t)=\inf\{s\mid d_f(s)\le t\}$ is the {\em
nonincreasing rearrangement} of $f$ (\cite[p.~45]{Graf}). We take
$L^{p,q}(M)$ to be the set of all measurable $f:M\lgra \C$ such
that $\|f\|^*_{p,q}<\infty$. By $L^{\infty, \infty}(M)$ and
$\|\cdot\|_{\infty, \infty}$ we mean respectively the space
$L^\infty(M)$ and the norm $\|\cdot\|_\infty$.
For $p, q\in [1, \infty)$ the following identity  gives an
alternative expression of $\|\cdot\|_{p,q}^*$ which we will use.
\begin{equation*}\frac qp\int_0^\infty (t^{1/p}f^*(t))^q\frac
{dt}t=q\int_0^\infty (td_f(t)^{1/p})^q\frac{dt} t.\end{equation*}
The proof of this identity can be found, for instance, in \cite{RS}.
For $p,q$ in the range above,
$L^{p,p}(M)=L^p(M)$ and if $q_1\le q_2$ then $\|f\|^*_{p, q_2}\le
\|f\|^*_{p, q_1}$ and consequently $L^{p, q_1}(M)\subset L^{p,
q_2}(M)$. It is easy to see from above that for the indicator function of a measurable set of finite measure $E$ we have $\|\chi_E\|_{L^{p,1}(M)}=m(E)^{1/p}$.

In this section and everywhere else we will follow the standard practice of using the letter $C$ for constant,
whose value may change from one line to another. Occasionally the
constant $C$ will be suffixed to show its dependency on important
parameters. We will also use the symbol $f(x)\asymp g(x)$ to mean that there exist two positive constants $C_1$, $C_2$ such that
$C_1f(x)\leq g(x)\leq c_2f(x)$ for appropriate values of $x$.

\subsection[Main result]{Main result}
Regarding the estimate of $R_df$ for $L^p$ functions on $G(n,k)$ it is known
from \cite{R5},{ Corallary 2.6} that if $f\in L^p(G(n,k))$, $1\leq p< \frac{n-k}{d-k}$ then the
integrals involved in (\ref{rg}) is well defined. This range of $p$ is sharp in the sense that if
$p \geq \frac {n-k}{d-k}$ then there exists a radial function $f \in L^p(G(n,k))$ such that $R_df(\zeta)=\infty$
for almost every $\zeta \in G(n,d)$. Our main result in this section deals with behaviour of $R_d$ restricted
to radial functions in $L^{\frac{n-k}{d-k},1}(G(n,k))$. We start by quoting a simple lemma from \cite{KR2}.

\begin{Lemma} Let $n \in \mathbb N$ and $\gamma \geq 1.$ If $x_1 \geq x_2
\geq \ldots \geq x_{n} \geq 0$ are real numbers then the following
inequality holds, \be\left(\sum_{i=1}^{n} (-1)^{i-1}
x_{i}\right)^{\gamma} \leq \sum_{i=1}^{n} (-1)^{i-1}
(x_{i})^{\gamma}.\label{basic} \ee\label{lemma} \end{Lemma}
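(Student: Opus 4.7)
The plan is to recast both sides of the inequality as integrals of the monotone weight $t^{\gamma-1}$ against indicator functions on $[0,\infty)$ and then invoke a one-dimensional rearrangement comparison. Write $S_n=\sum_{i=1}^n(-1)^{i-1}x_i$; pairing adjacent terms and using the monotonicity $x_1\geq\cdots\geq x_n\geq 0$ shows that $S_n\geq 0$, so the left-hand side of (\ref{basic}) is meaningful.

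First I would use the elementary identity $x^\gamma=\gamma\int_0^{x}t^{\gamma-1}\,dt$, valid for $x\geq 0$ and $\gamma\geq 1$, to rewrite the right-hand side. Exchanging the finite sum with the integral yields
$$\sum_{i=1}^n(-1)^{i-1}x_i^\gamma=\gamma\int_0^\infty t^{\gamma-1}\varphi(t)\,dt,\qquad \varphi(t):=\sum_{i=1}^n(-1)^{i-1}\chi_{[0,x_i]}(t).$$
For each $t>0$, the value $\varphi(t)$ depends only on the integer $j(t)=|\{i:x_i\geq t\}|$, and equals $1$ when $j(t)$ is odd and $0$ when $j(t)$ is even. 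Consequently $\varphi=\chi_E$, where (with the convention $x_{n+1}=0$)
$$E=\bigcup_{\substack{1\leq j\leq n\\ j\ \text{odd}}}(x_{j+1},x_{j}],$$
and a direct telescoping gives $|E|=S_n$.

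Finally, since $\gamma\geq 1$, the weight $t\mapsto t^{\gamma-1}$ is non-decreasing on $[0,\infty)$. Among all measurable subsets of $[0,\infty)$ having Lebesgue measure $S_n$, the integral of $t^{\gamma-1}$ is minimized on the initial segment $[0,S_n]$; hence
$$\gamma\int_0^\infty t^{\gamma-1}\chi_E(t)\,dt\;\geq\;\gamma\int_0^{S_n}t^{\gamma-1}\,dt\;=\;S_n^{\gamma},$$
which is exactly (\ref{basic}). The only mild obstacle is the bookkeeping involved in identifying $E$ and tracking parities (in particular the distinction between $n$ even and odd, where in the odd case the last interval $(0,x_n]$ contributes the extra term $x_n$ to $|E|$); once $E$ is written in the form above, the argument needs nothing more than the monotone-weight rearrangement comparison.
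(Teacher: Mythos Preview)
Your argument is correct. The paper, however, does not supply its own proof of this lemma: it is simply quoted from the companion paper \cite{KR2}, so there is no in-text argument to compare against.

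That said, your approach bears an interesting relation to the rest of the paper. The paper deduces Lemma~\ref{ionescu} (and its reformulation in Remark~\ref{ioneskure}) \emph{from} Lemma~\ref{lemma}; your proof effectively runs in the opposite direction. Your key rearrangement step---that for measurable $E\subset[0,\infty)$ of measure $m$ and $\gamma\ge 1$ one has $\gamma\int_E t^{\gamma-1}\,dt\ge m^\gamma$---is precisely the inequality (\ref{ionreduced}) with $p=\gamma$; you then specialize it to the set $E=\bigcup_{j\ \mathrm{odd}}(x_{j+1},x_j]$ to recover (\ref{basic}). Thus the two lemmas are equivalent, and you have chosen to establish the integral version directly via a monotone-weight comparison, whereas the route in \cite{KR2} is presumably algebraic (e.g.\ induction on $n$). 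Both are short; yours has the merit of making the source of the inequality transparent as a comparison of $E$ against the left-packed interval $[0,m]$ under a non-decreasing weight.
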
 The main
result of this section is the following Theorem.
\bt \label{th2} If
$n>d>k\geq 0,$  then there exists a constant $C>0$ such that for all
radial functions $f$ on $G(n,k)$ the following estimate holds \be
\sup_{s \in (0,\infty)}A_d(f)(s)\leq
C\|f\|_{L^{\frac{n-k}{d-k},1}(G(n,k))}.\label{pestk} \ee
\label{grast}\et
\begin{proof}
In view of general theory of Lorentz spaces it suffices to prove (\ref{pestk})
for indicator functions of radial open sets with finite measure  (see
 \cite{SW3}, Theorem 3.13 and \cite{DNO}). Since these functions can be
approximated by functions of form $\chi_{\cup_{i=1}^lE_i}$ where $E_i=\{\tau \in G(n,k):a_i\leq
|\tau|<b_i\},$ $b_i<a_{i+1},$ $a_1\geq 0$ and $i=1,\ldots,l$ it suffices to prove the result for these functions. An explicit calculation using (\ref{drg}) shows that
\be A_d (\chi_{E_i})(s)=C\begin{cases}\big{[}(b_i^2-s^2)^{\frac
{d-k}{2}}-(a_i^2-s^2)^{\frac {d-k}{2}}\big{]},&\qquad
s<a_i\\
(b_i^2-s^2)^{\frac {d-k}{2}},&\qquad a_i\leq s\leq b_i\\
0,&\qquad b_i<s
\end{cases}\label{exp1}
\ee where $s=|\zeta|,$ $\zeta \in G(n,d).$ If we denote
$E=\cup_{i=1}^lE_i$ then by using linearity of $A_d$ and
(\ref{exp1}) we get that \be
A_d(\chi_{E})(s)=C\begin{cases}\sum_{i=1}^l\big{[}(b_i^2-s^2)^{\frac
{d-k}{2}}-(a_i^2-s^2)^{\frac {d-k}{2}}\big{]},\quad\mbox{if }
s<a_1.\\
\left(\left(\sum_{i=j+1}^l\big{[}(b_i^2-s^2)^{\frac {d-k}{2}}-(a_i^2-s^2)^{\frac {d-k}{2}}\big{]}\right)+(b_j^2-s^2)^{\frac {d-k}{2}}\right),\\
\qquad \mbox{if }a_j\leq s<b_j,  1\leq j \leq l. \\
\sum_{i=j+1}^l\big{[}(b_i^2-s^2)^{\frac {d-k}{2}}-(a_i^2-s^2)^{\frac {d-k}{2}}\big{]},\\
\qquad\mbox{if }
b_j\leq s<a_{j+1},1\leq j \leq {l-1}.\\
0,\qquad b_l < s.\end{cases}\label{exp2} \ee Now, for a fixed $s>0$, let
$E_s$ denotes the set $\{\tau\in E: |\tau|>s\}$ so that
$\chi_{E_s}\leq \chi_E$ and hence the corresponding distribution
functions satisfy the relation $d_{\chi_{E_s}}(r)\leq d_{\chi_E}(r)$
for all $r>0.$ This implies that $\|\chi_{E_s}\|_{L^{\frac{n-k}{d-k},1}(G(n,k))}\leq \|\chi_{E}\|_{L^{\frac{n-k}{d-k},1}(G(n,k))}$. To prove the theorem it thus suffices to show that
for all $s>0$ \be A_d(\chi_E)(s)\leq
C\|\chi_{E_s}\|_{L^{\frac{n-k}{d-k},1}(G(n,k))},\label{basica} \ee
where $C$ is independent of $s$ and $l$.

We now turn towards the calculation of the required Lorentz norm of the function $\chi_{E_s}$.
By using polar coordinates on $\R^{n-k}$ and the expression of the measure on $G(n,k)$ given in the beginning of this section
it follows that \bea \|\chi_{E_s}\|_{L^{\frac{n-k}{d-k},1}(G(n,k))}&=&\mu_k(E_s)^{\frac{d-k}{n-k}}\nonumber\\&=&
C\begin{cases}
\left(\sum_{i=1}^{l} [ {b_i}^{{n-k}} -
{a_i}^{n-k}]\right)^{\frac{d-k}{n-k}},\quad\mbox{if } s < a_1.\\
\left(\left(\sum_{i=j+1}^{l} [ {b_i}^{{n-k}} -
{a_i}^{{n-k}}]\right)+{b_j}^{{n-k}} -{s}^{n-k}\right)^{\frac{d-k}{n-k}},\\\qquad\mbox{if } a_j< s < b_j, 1\leq j \leq l.\\
\left(\sum_{i=j+1}^{l} [ {b_i}^{{n-k}} -
{a_i}^{n-k}]\right)^{\frac{d-k}{n-k}},\\\qquad \mbox{if }b_j<s<a_{j+1}, 1\leq j \leq {l-1}.\\
0,\quad\mbox{if } b_l<s.\end{cases}\eea We will now consider $s\in
(a_j,b_j)$ for a fixed $j$ and prove (\ref{basica}). Other cases can
be dealt with exactly the same way.

For $a>0,$ $\gamma\geq 1$ and  $x\geq y\geq 0$ we apply Lemma
\ref{lemma} to get the inequality
$(x+a)^{\gamma}-x^{\gamma}\geq (y+a)^{\gamma}-y^{\gamma}$. If we now choose $\gamma=({n-k})/2$ (which is greater than or
equal to $1$ by hypothesis) and $x=a_i^2,$ $y=a_i^2-s^2,$
$a=b_i^2-a_i^2$ and use the previous inequality then we get
$${b_i}^{{n-k}} -
{a_i}^{n-k}\geq \big{[}(b_i^2-s^2)^
{\frac{{n-k}}{2}}-(a_i^2-s^2)^{\frac{{n-k}}{2}}\big{]}.$$
Similarly,
$${b_j}^{{n-k}} - {s}^{n-k}\geq (b_j^2-s^2)^{\frac{{n-k}}{2}}. $$
Hence,
\bea && \left(\sum_{i=j+1}^{l} [
{b_i}^{{n-k}} -
{a_i}^{n-k}]+{b_j}^{{n-k}} - {s}^{n-k}\right)^{\frac{d-k}{{n-k}}}\label{estg1}\\
&&\geq \left(\sum_{i=j+1}^l\big{[}(b_i^2-s^2)^
{\frac{{n-k}}{2}}-(a_i^2-s^2)^{\frac{{n-k}}{2}}\big{]}
+(b_j^2-s^2)^{\frac{{n-k}}{2}}\right)^{\frac{d-k}{{n-k}}},\nonumber
\eea for all $s\in (a_j,b_j).$ As $(b_l^2-s^2)^{({n-k})/2}>
(a_l^2-s^2)^{({n-k})/2}>\ldots>(b_j^2-s^2)^{({n-k})/2}$ we can apply
Lemma \ref{lemma} for $\gamma={(n-k)/(d-k)}$ in (\ref{estg1}) to get
\bea &&\left(\sum_{i=j+1}^l\big{[}(b_i^2-s^2)^
{\frac{{n-k}}{2}}-(a_i^2-s^2)^{\frac{{n-k}}{2}}\big{]}
+(b_j^2-s^2)^{\frac{{n-k}}{2}}\right)^{\frac{d-k}{{n-k}}},\nonumber\\
&&\geq\left(\left(\sum_{i=j+1}^l\big{[}(b_i^2-s^2)^{\frac {d-k}{2}}-(a_i^2-s^2)^{\frac {d-k}{2}}\big{]}\right)+(b_j^2-s^2)^{\frac {d-k}{2}}\right)\nonumber.
\eea
The result now follows from \eqref{estg1} and the last inequality.
\end{proof}
\begin{Remark}
{\em We note that if $k=0$ then the estimate (\ref{pestk}) boils down to the estimate (\ref{DNOsynp}) proved in \cite{DNO}}.
\end{Remark}
We will now briefly discuss the $L^p-L^q$ mapping property of $A_d$. We first recall that $R_d$ satisfies the trivial estimate $\|R_df\|_{L^1(G(n,d))} \leq C \|f\|_{L^1(G(n,k))}$
(see \cite{R5}). We can thus use \eqref{pestk} and off-diagonal Marcinkiewicz interpolation theorem (\cite{Graf}, Theorem $1.4.19$) to obtain the following result.
\bt
If $ 0 \leq k < d \leq n-1$, $1\leq p<\frac{n-k}{d-k}$ and $\frac{n-k}{p}=d-k+\frac{n-d}{q},$ then there exists a constant $C>0$ such that
for all radial functions $f \in L^p(G(n,k))$ the following inequality holds,
\be
\|A_df\|_{L^q(G(n,d))}\leq C\|f\|_{L^p(G(n,k))}.\ee
\et

\section[Real Hyperbolic Space]{Real Hyperbolic Space}
\subsection[Notation and Preliminaries]{Notation and Preliminaries}
In this section we will set up notation and explain the basic facts
needed to deal with the Radon transform on real hyperbolic space.
Most of these material is standard and can be found in \cite{BR1}, \cite{BR2}, \cite{Bray},\cite{He2},
\cite{Rj},\cite{R4}, \cite{VK}.

For $n\geq2,$ ${\E}^{n,1}$ denotes the set $\R^{n+1}$ equipped with
the bilinear form \be
[x,y]=-x_1y_1-{\cdots}-x_ny_n+x_{n+1}y_{n+1},\qquad x,y\in
\E^{n,1}.\label{in} \ee We consider the set $F^n=\{x\in
\E^{n,1}:[x,x]=1\}.$ The set $F^n$ is a hyperboloid of two sheets.
The $n\mbox{-}$dimensional real hyperbolic space $\mathbb H^n$ is
defined as the upper sheet of the hyperboloid, that is, \be
\mathbb H^n=\{x \in{\E}^{n,1} : [x,x]=1, x_{n+1}>0\}. \label{defhn}
\ee Let $O(n,1)$ denotes the group of invertible linear transformations of
$\R^{n+1}$ which preserves the bilinear form given in (\ref{in}),
that is,
\begin{eqnarray}
O(n,1)&=&\{g\in GL(n+1,\R): [gx,gy]=[x,y]\ \mbox{for all $x,y\in
\E^{n,1}$}\}\nonumber \\&=&\{g\in GL(n+1,\R):
g^tJg=gJg^t=J\},\label{j}\end{eqnarray} where
$J=\mbox{diag}(-1,\ldots,-1,1)$ (\cite{Rj}, Theorem $3.1.4$). We define
 $SO(n,1)=\{g\in O(n,1): \mbox{det }g=1\}$. Let $O_0(n,1)$ denotes the subgroup of $O(n,1)$ given
by,
$$O_0(n,1)=\{g\in O(n,1): e_{n+1}^tge_{n+1}>0 \}.$$
The subgroup $SO_0(n,1)$ of $O(n,1)$ is defined by, \be
SO_0(n,1)=O_0(n,1)\cap SO(n,1). \ee

It is known that the natural action of $SO_0(n,1)$ on $\mathbb H^n$
is transitive (\cite{Rj}, Theorem $3.1.6$). Let $\{e_1,\ldots
,e_{n+1}\}$ be the standard orthonormal basis of $\R^{n+1}$ and
$x_0=e_{n+1}$ be the origin of $\mathbb H^n.$ Let $K$ be the
isotropy subgroup of $SO_0(n,1)$ at $x_0,$ that is, $K=\{g\in
SO_0(n,1): g(x_0)=g(e_{n+1})=x_0\}$. It turns out that \be
 K= \left\{k=\left [ \begin{matrix}
B & 0 \\
0 & 1 \end{matrix} \right ]:B\in SO(n)\right\}. \label{maxcom}
 \ee
It is well known that $G=SO_0(n,1)$ is a noncompact, connected, rank
one semisimple Lie group and $K$ is a maximal compact subgroup of
$SO_0(n,1)$ (\cite{VK}, p.$2$). It follows that $\mathbb H^n$ is
diffeomorphic to the homogeneous space $SO_0(n,1)/K$ with the
identification $gK\mapsto g.x_0.$ Hence $\mathbb H^n$ is a
Riemannian symmetric space of noncompact type (see \cite{VK}, p. 2).
The $G\mbox{-}$invariant Riemannian metric on $\mathbb H^n$ is given
by \be d(x,y)=\cosh^{-1}([x,y]), \quad x,y \in \mathbb H^n
\label{metric},\ee (\cite{Rj} Theorem $3.2.2$). To proceed further
we need the analogs of $d$-dimensional planes in $\mathbb H^n$. This
requires the notion of bispherical coordinates on $\mathbb H^n.$
Bispherical coordinates are natural generalizations of the notion of
polar coordinates. For $1\leq d\leq n,$ let $\R^{n+1}=R^{n-d}\oplus
R^{d+1}$ where $R^{n-d}=\mbox{span }\{e_1,\ldots, e_{n-d}\}$ and
$R^{d+1}=\mbox{span }\{e_{n-d+1},\ldots, e_{n+1}\}$.
 The following lemma can be found
in \cite{VK}, page $12$.
\begin{Lemma}
If $1\leq d\leq n-1$ then every $x\in\mathbb H^n,$ can be written as
\be x=\zeta\sinh u+\eta\cosh u, \label{coordinate}\ee \ where $0\leq
u<\infty,$ $\zeta\in S^{n-d-1}\subset R^{n-d},$ and $\eta\in \mathbb
H^d\subset R^{d+1}.$ \label{bspco}
\end{Lemma}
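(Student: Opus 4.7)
The plan is to exploit the orthogonal decomposition $\R^{n+1}=R^{n-d}\oplus R^{d+1}$ implicit in the splitting of the standard basis. Write any $x\in\mathbb H^n$ as $x=x'+x''$ with $x'\in R^{n-d}$ and $x''\in R^{d+1}$. Since the bilinear form in \eqref{in} vanishes on cross terms coming from disjoint coordinate blocks, is negative definite on $R^{n-d}$, and is Lorentzian on $R^{d+1}$, the constraint $[x,x]=1$ reduces to
\[
-\|x'\|^2+[x'',x'']=1,
\]
where $\|\cdot\|$ denotes the Euclidean norm. Thus $[x'',x'']=1+\|x'\|^2\geq 1$, so $x''$ is a timelike vector in $R^{d+1}$.

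Next I would introduce the hyperbolic angle by defining $u\geq 0$ via $\sinh u=\|x'\|$, so that $\cosh u=\sqrt{1+\|x'\|^2}=\sqrt{[x'',x'']}$. When $\|x'\|>0$, set $\zeta=x'/\|x'\|\in S^{n-d-1}\subset R^{n-d}$ and $\eta=x''/\cosh u\in R^{d+1}$; when $\|x'\|=0$, take $u=0$, $\eta=x''$, and let $\zeta$ be any point of $S^{n-d-1}$. In either case, $x=\zeta\sinh u+\eta\cosh u$ by construction. The verification $\eta\in\mathbb H^d$ has two parts: the identity $[\eta,\eta]=[x'',x'']/\cosh^2 u=1$ is immediate from the choice of $u$, and the positivity $\eta_{n+1}=x_{n+1}/\cosh u>0$ follows from $x\in\mathbb H^n$ together with $\cosh u>0$, placing $\eta$ on the upper sheet of the hyperboloid in $R^{d+1}$, which is $\mathbb H^d$ by definition.

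The argument is essentially a direct verification and there is no serious obstacle; the only subtlety worth flagging is the degenerate case $x'=0$, where $u=0$ causes the $\zeta$-factor to drop out and $\zeta$ is not uniquely determined. This is consistent with the statement of the lemma, which asserts existence but not uniqueness of the decomposition.
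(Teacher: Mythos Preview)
Your argument is correct: the orthogonal splitting $\R^{n+1}=R^{n-d}\oplus R^{d+1}$ together with the observation that the form \eqref{in} restricts to the negative of the Euclidean inner product on $R^{n-d}$ and to the Lorentzian form on $R^{d+1}$ gives exactly the identity $-\|x'\|^2+[x'',x'']=1$, from which the hyperbolic parameter $u$ and the factors $\zeta,\eta$ are read off as you do. The handling of the degenerate case $x'=0$ is also right, and your remark about non-uniqueness there anticipates precisely what the paper notes immediately after the lemma.

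As for comparison with the paper: the paper does not supply its own proof of this lemma but simply quotes it from \cite{VK}, p.~12. Your direct verification is the natural (and essentially the only) way to establish the statement, so there is nothing materially different to contrast.
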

We note that if $x\in\mathbb H^n\setminus (\mathbb H^n\cap R^{d+1})$ then the above
representation is unique. In addition, if we consider the case $d=0$
then it follows that every $x\in\mathbb H^n,$  $x\neq x_0$ has a
unique expression \be x=\w\sinh r+e_{n+1}\cosh r,\label{polarh}\ee
where $\w\in S^{n-1}\subset R^n.$ This is the so called polar
coordinate representation of the point $x\in\mathbb H^n.$

The $G\mbox{-}$invariant measure on $\mathbb H^n$ with respect to
the polar coordinates is given by
\be dx=c_n \sinh^{n-1}r\ dr\
d\sigma_{n-1}(\w).\label{measure}\ee

Since our main object of study is the analogue of $d\mbox{-}$plane
transform, we also need a good understanding of the analogues of
$d\mbox{-}$dimensional planes in $\mathbb H^n.$
\begin{Definition}
For $1\leq d\leq n$ we define $\Lambda_{d}$ to be the set of all
linear subspaces $V$ of $\R^{n+1}$ such that:\\
i) $\mbox{dim }V=d+1.$\\
ii) There exists $v\in V$ such that $[v,v]>0.$
\end{Definition}
Since every $V\in\Lambda_d$ contains a vector $v$ with $[v,v]>0$, it
follows that a suitable scalar multiple of $v$ is in $\mathbb H^n$
and hence $\mathbb H^n\cap V\neq \phi.$
\begin{Definition}
$\xi\subset \mathbb H^n$ is called a hyperbolic $d\mbox{-}$plane (or
simply $d\mbox{-}$plane) if there exists $V\in\Lambda_d$ such that
$\xi=\mathbb H^n\cap V.$ The set of all $d\mbox{-}$planes of
$\mathbb H^n$ is denoted by $\Xi_d.$
\end{Definition}
Note that the case $d=1$ describes all the geodesics of $\mathbb
H^n$ (\cite{Rj}, p. $64$). The importance of the $d\mbox{-}$planes
comes from the fact that they are precisely the totally geodesic
submanifolds of $\mathbb H^n,$ that is, the geodesics of these
submanifolds are geodesics of $\mathbb H^n$ (see \cite{Rj}, p.
$72$).

The set $\Xi_d$ can also be thought of as a homogeneous space of the
group $G=SO_0(n,1)$ as follows. We fix a particular $d\mbox{-}$plane
$\xi_0\in\Xi_d$ given by $\xi_0=\mathbb H^n\cap \mbox{
span~}\{e_{n-d+1},\ldots ,e_{n+1}\}$. Using Theorem $3.1.6$ of
\cite{Rj} it follows that the action of $SO_0(n,1)$ on $\Lambda_d$
is transitive and consequently the action of $SO_0(n,1)$ on $\Xi_d$
is also transitive. It is not hard to see that the isotropy subgroup
at $\xi_0$ is isomorphic to $H=SO(n-d)\times SO_0(d,1)$ and hence
$\Xi_d$ is homeomorphic to $G/H$. It follows from the above
discussion that $\xi_0$ can be described as \be \xi_0=\{hx_0: h\in
SO(n-d)\times SO_0(d,1)\}.\label{haction}\ee Since the action of
$SO_0(n,1)$ on $\Xi_d$ is transitive we have that given any
$\xi\in\Xi_d$ there exists $g(\xi)\in SO_0(n,1)$ such that
$g(\xi).\xi_0=\xi$ (note that $g(\xi)$ is unique modulo $H$). So the
element of $\Xi_d$ are nothing but the $G\mbox{-}$translates of
$\xi_0.$ Let $\mu$ denotes the $G\mbox{-}$invariant measure on the
set $\Xi_d$. The explicit expression of the measure $\mu$ is given
by the following lemma (\cite{BR2}, Lemma $2.1$).
\begin{Lemma}
 If $F$ is a nonnegative measurable function on $\Xi_d$ then
 \be
 \int_{\Xi_d}F(\xi)d\mu(\xi)=\int_0^{\infty}\left(\int_K
F(kg_u^{-1}\xi_0)dk\right)(\sinh u)^{n-d-1}(\cosh u)^d du, \ee where
$dk$ denotes the normalized Haar measure on $K$ and \\
\be
 g_u= \left [ \begin{matrix}
\cosh u &      0     & \sinh u \\
   0          &  I_{n-1}   &   0            \\
\sinh u &      0     & \cosh u \end{matrix} \right ]. \label{gtheta}
 \ee \label{xidm}
 \end{Lemma}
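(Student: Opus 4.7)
The plan is to parameterize $\Xi_d \cong G/H$ by $(k,u) \in K \times [0,\infty)$ via the map $(k,u) \mapsto kg_u^{-1}\xi_0$ and then compute the density of the $G$-invariant measure $\mu$ in these coordinates. Since both $G = SO_0(n,1)$ and $H = SO(n-d) \times SO_0(d,1)$ are unimodular, $\mu$ exists and is unique up to a positive scalar, so it suffices to verify that the right-hand side of the claimed formula defines a $G$-invariant Radon measure, equivalently, to exhibit the density explicitly.

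First I would verify the parameterization. By (\ref{metric}) and the identity $[g_u x_0, x_0] = \cosh u$, the point $g_u^{-1} x_0$ lies at hyperbolic distance $u$ from $x_0$; a direct check shows it is the foot of the perpendicular from $x_0$ to $g_u^{-1}\xi_0$. Since $K$ is the isotropy of $x_0$ and acts transitively on unit tangent vectors there, while the subgroup $K \cap H = SO(n-d) \times SO(d)$ rotates within the normal and tangential frames of $\xi_0$ at $x_0$, it follows that $K$ acts transitively on $\{\xi \in \Xi_d : d(x_0, \xi) = u\}$ for each $u \geq 0$. Consequently every $\xi$ can be written as $kg_u^{-1}\xi_0$, establishing the parameterization.

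Next I compute the density. By the standard quotient integration formula for unimodular pairs,
\[
\int_G f(g)\,dg = \int_{G/H} \int_H f(gh)\,dh\, d\mu(gH),
\]
so writing $g = kg_u^{-1}h$ with $(k,u,h) \in K \times [0,\infty) \times H$, the Haar measure decomposes as $dg = J(u)\,dk\,du\,dh$ for some density $J$ that depends only on $u$, a consequence of left $K$-invariance and right $H$-invariance. Substituting this back yields
\[
\int_{\Xi_d} F\, d\mu = \int_0^\infty \int_K F(kg_u^{-1}\xi_0)\,J(u)\,dk\,du,
\]
so it remains to identify $J(u) = c(\sinh u)^{n-d-1}(\cosh u)^d$. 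To compute $J(u)$ I would differentiate the map $(k,u,h) \mapsto kg_u^{-1}h$ at $(e,u,e)$ and evaluate its Jacobian determinant on a chosen complement to $\mathrm{Lie}(K) + \mathrm{Ad}(g_u^{-1})\mathrm{Lie}(H)$ inside $\mathfrak{so}(n,1)$. Using the Cartan decomposition $\mathfrak{so}(n,1) = \mathfrak{k} \oplus \mathfrak{p}$ and the inclusion $\mathfrak{p} \cap \mathfrak{h} \cong \mathbb{R}^d$, this complement splits into $(n-d-1)$ transverse directions on which $\mathrm{Ad}(g_u)$ scales by $\sinh u$ and $d$ tangential directions scaling by $\cosh u$, yielding the claimed density.

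The main obstacle is this Jacobian computation: one must identify the complementary subspace correctly and evaluate $\mathrm{Ad}(g_u)$ on each eigenspace of the relevant restricted roots of $\mathfrak{so}(n,1)$. Geometrically, the $\sinh u$ factor reflects the hyperbolic spread of geodesics emanating from $x_0$, while the $\cosh u$ factor reflects the expansion of the equidistant hypersurface at distance $u$ parallel to $\xi_0$. As a sanity check, when $d = 0$ the formula reduces to the polar decomposition (\ref{measure}) on $\mathbb{H}^n$. Once $J$ is pinned down, the formula follows after absorbing $c$ into the normalization of $\mu$.
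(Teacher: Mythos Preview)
The paper does not prove this lemma; it is quoted from \cite{BR2}, Lemma~2.1, so there is no proof here to compare against directly.

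Your overall strategy is sound and is one standard route: realize $\Xi_d$ as $G/H$ with $G,H$ unimodular, parameterize a full-measure subset by $(k,u)\mapsto kg_u^{-1}\xi_0$, and identify $J(u)$ via a $KAH$-type decomposition of Haar measure. The heuristic you give for the exponents is also on the right track: $(G,H)$ is a semisimple symmetric pair (with involution $\sigma(g)=IgI$, $I=\mathrm{diag}(-I_{n-d},I_{d+1})$), and the density in the generalized Cartan decomposition $G=KAH$ is a product over restricted roots of powers of $\sinh$ and $\cosh$, with exponents coming from the split of each root multiplicity under $\sigma$. For $\mathfrak{so}(n,1)$ the single positive root has multiplicity $n-1$, and the split is indeed $(n-d-1)+d$, yielding $(\sinh u)^{n-d-1}(\cosh u)^d$. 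However, your phrasing ``$\mathrm{Ad}(g_u)$ scales by $\sinh u$'' on a complement is imprecise: $\mathrm{Ad}(g_u)$ acting on $\mathfrak{g}/\mathfrak{h}$ does not have $\sinh u$ as an eigenvalue, and the relevant ``complement'' is not $(n-1)$-dimensional (note $\dim\Xi_d=(n-d)(d+1)$). The $\sinh$/$\cosh$ factors arise instead from the interaction of $\mathrm{Ad}(g_u)$ with the projections modulo $\mathfrak{k}$ and $\mathfrak{h}$ simultaneously; making this precise requires either the general machinery for $KAH$ decompositions or a direct matrix computation.

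A more elementary route, almost certainly the one in \cite{BR2}, avoids Lie theory entirely: compute the Jacobian of the bispherical coordinates $x=\zeta\sinh u+\eta\cosh u$ on $\mathbb{H}^n$ (Lemma~\ref{bspco}), which gives $dx=C(\sinh u)^{n-d-1}(\cosh u)^d\,du\,d\sigma(\zeta)\,d\eta$ directly, and then transfer this to $\Xi_d$ via the double fibration and the duality relation between $R_d$ and its dual. This bypasses the adjoint computation altogether.
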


We are now in a position to describe the notion of the
$d\mbox{-}$dimensional totally geodesic Radon transform. If $f\in
C_c^{\infty}(\mathbb H^n)$ and $\xi\in\Xi_d$ then the totally
geodesic $d\mbox{-}$dimensional Radon transform (or simply
$d\mbox{-}$plane transform) of $f$ at $\xi$ is defined as \be
R^-_df(\xi)=\int_Hf(g(\xi)h.x_0)\ dh,\label{radon1} \ee where $dh$
is the Haar measure of the group $H$. Since any $\xi$ is of
the form $g.\xi_0$ for some $g\in SO_0(n,1)$ the above definition is
equivalent to \be
R^-_df(g.\xi_0)=\int_Hf(gh.x_0)dh.\label{radon2}\ee

Though we have defined the $d\mbox{-}$plane transform only for
$C_c^{\infty}$ functions but it turns out that the $d\mbox{-}$plane
transform also makes sense for certain class of $L^p$ functions. We
now quote a result from \cite{BR2} which gives a precise description
of this class.
\begin{Theorem}
If $1\leq p<\frac{n-1}{d-1}$ and $f\in L^p(\mathbb H^n)$ then the
integral in (\ref{radon1}) converges for almost every $\xi\in\Xi_d.$
Consequently $R^-_df(\xi)$ is well defined for almost every
$\xi\in\Xi_d.$ Moreover if $p\geq \frac{n-1}{d-1}$ then there exists
a nonnegative radial function $f$ such that $R^-_df(\xi)=\infty$ for
almost every $\xi\in \Xi_d.$
\end{Theorem}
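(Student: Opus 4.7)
The approach is to prove convergence by a duality/Fubini argument and to establish sharpness by an explicit radial counterexample. Assume $f\geq 0$. Since $R^-_d f$ is a nonnegative measurable function on the $G$-homogeneous space $\Xi_d$, it suffices to show that $R^-_d f$ is locally integrable with respect to $\mu$; the $G$-invariance of $\mu$ and transitivity of the $G$-action then propagate finiteness to almost every $\xi$. A Fubini computation using the parametrization of $\mu$ in Lemma~\ref{xidm} yields, for any compact $\Omega\subset \Xi_d$,
\begin{equation*}
\int_\Omega R^-_d f(\xi)\, d\mu(\xi) = \int_{\mathbb H^n} f(x)\, \check{\chi}_\Omega(x)\, dx,\qquad \check{\chi}_\Omega(x):=\mu(\{\xi\in\Omega:x\in\xi\}),
\end{equation*}
and H\"older's inequality reduces the problem to showing $\check{\chi}_\Omega\in L^{p'}(\mathbb H^n)$ for every compact $\Omega$, where $p'$ is the conjugate exponent.

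To bound $\check{\chi}_\Omega$, I would take $\Omega=\Omega_U:=\{\xi\in\Xi_d:d(x_0,\xi)\leq U\}$, which is $K$-invariant, compact, and exhausts $\Xi_d$. A geometric argument in the tangent space $T_x\mathbb H^n$ shows that a $d$-plane through $x$ with $r:=d(x,x_0)$ meets $\Omega_U$ if and only if it lies in a cone of angular opening $\asymp \sinh U/\sinh r$ around the direction pointing to $x_0$; the measure of this cone on the Grassmannian of $d$-planes through $x$ is of order $(\sinh U/\sinh r)^{n-d}$. This gives $\check{\chi}_{\Omega_U}(x)\leq C_U\, e^{-(n-d)r}$ for large $r$, together with boundedness on compacta. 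Combined with the volume element $\sinh^{n-1} r\asymp e^{(n-1)r}$, this yields
\begin{equation*}
\|\check{\chi}_{\Omega_U}\|_{p'}^{p'}\asymp \int_1^\infty e^{[(n-1)-(n-d)p']\, r}\, dr,
\end{equation*}
which is finite precisely when $(n-d)p'>n-1$, i.e.\ $p<(n-1)/(d-1)$.

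For sharpness, take a radial $f(x)=F(\cosh r)$ with $F(t)=t^{-\alpha}(1+\log t)^{-\beta}\chi_{[2,\infty)}(t)$. Using the intrinsic identity $\cosh d(y,x_0)=\cosh u\cosh s$ valid for a point $y$ at intrinsic distance $s$ from the foot of the perpendicular dropped from $x_0$ onto a $d$-plane $\xi_u$ at distance $u$ from $x_0$, the Radon transform collapses to
\begin{equation*}
R^-_d f(\xi_u)=c\int_0^\infty F(\cosh u\cosh s)\,\sinh^{d-1}s\, ds.
\end{equation*}
For $p>(n-1)/(d-1)$ choose $\alpha\in((n-1)/p,\,d-1]$ and $\beta=0$: then $\|f\|_p<\infty$ while the $s$-integral diverges for every $u$. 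At the endpoint $p=(n-1)/(d-1)$, take $\alpha=d-1$ and $\beta\in((d-1)/(n-1),\,1]$: the logarithmic weight brings $f$ into $L^p$, while the $s$-integrand decays only like $(1+u+s)^{-\beta}$ and still produces a divergent integral.

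The hard step is the decay estimate $\check{\chi}_{\Omega_U}(x)\leq C e^{-(n-d)r}$ with the precise exponent $n-d$. It requires identifying the measure of $d$-planes through $x$ meeting $\Omega_U$ as an angular measure on the isotropy orbit at $x$ and disintegrating the $G$-invariant measure of Lemma~\ref{xidm} with respect to the foot-of-perpendicular map $\xi\mapsto d(x_0,\xi)$; pinning down the exponent as exactly $n-d$, rather than $n-d\pm 1$, is what locks in the sharp threshold $p=(n-1)/(d-1)$.
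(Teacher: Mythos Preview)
The paper does not give its own proof of this theorem: it is explicitly quoted from Berenstein--Rubin \cite{BR2} (``We now quote a result from \cite{BR2}\ldots''), so there is no in-paper argument to compare against.

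That said, your proposal is a coherent and correct strategy, and it is worth recording how it relates to what one finds in the literature. Your duality approach --- bounding $\check{\chi}_{\Omega_U}$ in $L^{p'}$ via the angular-cone estimate --- is essentially the ``dual Radon transform'' method. The key geometric input, that the set of $d$-planes through $x$ meeting a fixed ball has measure $\asymp(\sinh U/\sinh r)^{n-d}$, is right: the hyperbolic right-triangle identity $\sinh u=\sinh r\sin\beta$ reduces the condition $u\le U$ to an angular cap of radius $\asymp e^{-r}$ about the subvariety $\{\Pi\in G_{n,d}:e_1\in\Pi\}$, which has codimension $n-d$ in $G_{n,d}$, giving the exponent you claim. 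The resulting integrability computation is then exactly the one you wrote. Your counterexample is also correct; note that since $R^-_df$ is automatically $K$-invariant for radial $f$, divergence for every $u$ indeed gives divergence for \emph{every} $\xi$, not merely almost every.

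One small over-complication: once you know $R^-_df\in L^1_{\mathrm{loc}}(\Xi_d,\mu)$ you immediately get a.e.\ finiteness on each $\Omega_U$, hence on $\Xi_d=\bigcup_U\Omega_U$; no separate ``propagation by $G$-invariance'' is needed. The genuinely delicate point, as you flag, is justifying the exponent $n-d$ rigorously; the codimension count above is the clean way to do it.
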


\subsection[The $d\mbox{-}$Plane Transform on Real Hyperbolic
Space]{The $d\mbox{-}$Plane Transform of Radial Functions on
$\mathbb H^n$} In this section we will concentrate mainly on the
class of radial functions. It turns out that, similar to Euclidean spaces, the $d\mbox{-}$plane
transform of a radial function on $\mathbb H^n$ can also be written
down explicitly involving an Abel type integral (\cite{BR2}, Lemma
$3.1$). For the reader's benefit we start with a brief discussion of
the $d\mbox{-}$plane transform of radial functions on $\mathbb H^n$.
A function $f$ on $\mathbb H^n$ is called radial if $f(x)=f(y)$
whenever $d(x_0,x)=d(x_0,y).$ Since the maximal compact subgroup $K$
acts transitively on the unit sphere (see \cite{He3}) it follows
that a function $f$ is radial if and only if $f(k x)=f(x)$ for all
$k \in K$ and $x \in \mathbb H^n$. We have from (\ref{metric}) that
$d(x_0,x)=\cosh^{-1}(x_{n+1})$ and hence a radial function $f$
depends only on the last component of $x,$ that is, on $x_{n+1}.$
Given a radial function $f$ on $\mathbb H^n$ we define a function
$\tilde{f}$ on $[1,\infty)$ by \be \tilde{f}(t)=f(x_1,\ldots,x_n,t),
\quad (x_1,\ldots,x_n,t)\in \mathbb H^n. \label{radial} \ee

The following lemma ( \cite{BR2}, Lemma $3.1$) explicitly describes the $d\mbox{-}$plane
transform of a radial function on $\mathbb H^n$.
\begin{Lemma}\label{rdlem}
If $f\in\C_c^{\infty}(\mathbb H^n)$ is a radial function then for
$\xi\in\Xi_d,$
 \bea R^-_df(\xi)&=&\frac{C_d}{(\cosh s)^{d-1}}
\int_{\cosh s}^{\infty} \tilde{f}(t)(t^2-\cosh^2s)^{\frac{d-2}{2}}\
dt \label{exphyp1}\\&=&\frac{C_d}{\cosh s}
\int_s^{\infty}\tilde{f}(\cosh r)
 \left(1-\frac{\tanh^2s}{\tanh^2r}\right)^{\frac{d-2}{2}} \sinh^{d-1} r\ dr
 \label{exphyp2}\eea
where $s=d(x_0,\xi).$
\end{Lemma}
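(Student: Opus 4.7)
The plan is to reduce to a canonical position using the transitivity of $G = SO_0(n,1)$ on $\Xi_d$, unfold the integral over $H$ into a Euclidean-style integral over $\xi_0$, and then change variables in the resulting one-dimensional integral to obtain both stated forms.

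First I would use the radial symmetry of $f$ together with Lemma \ref{xidm}. Since $R^-_df$ is $K$-invariant (as $f$ is $K$-invariant and $K$ stabilizes $x_0$), $R^-_df(\xi)$ depends only on the distance $s = d(x_0,\xi)$. Using the transitive action of $G$ on $\Xi_d$ and the computation $d(x_0, g_u \xi_0) = u$ (which follows from projecting $g_u^{-1}x_0 = -\sinh u\, e_1 + \cosh u\, e_{n+1}$ orthogonally onto $\xi_0$ via the bilinear form $[\cdot,\cdot]$), we may assume $\xi = g_s\xi_0$ so that
\be R^-_df(\xi) = \int_H f(g_s h\cdot x_0)\, dh. \nonumber \ee

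Next I would exploit the decomposition $H = SO(n-d)\times SO_0(d,1)$. The compact factor $SO(n-d)$ fixes $x_0 = e_{n+1}$, so integrating over it contributes only a constant. The remaining integration over $SO_0(d,1)$ reduces, by its transitivity on $\xi_0 \cong \mathbb H^d$ with stabilizer $SO(d)$ at $x_0$, to integration against the $SO_0(d,1)$-invariant measure on $\xi_0$. Parameterizing $\xi_0$ in polar coordinates as $y = \omega\sinh r + e_{n+1}\cosh r$ with $\omega \in S^{d-1}\subset\text{span}\{e_{n-d+1},\dots,e_n\}$ (possible since $d\leq n-1$), the invariant measure becomes $c_d\sinh^{d-1} r\,dr\,d\sigma_{d-1}(\omega)$. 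A direct matrix calculation using the explicit form of $g_s$ in \eqref{gtheta} shows that $g_s$ fixes each $\omega$ (since $\omega \perp e_1$) and sends $e_{n+1}$ to $\sinh s\, e_1 + \cosh s\, e_{n+1}$, so that the $(n+1)$-st coordinate of $g_s\cdot y$ equals $\cosh s\cosh r$. Since $f$ is radial this gives
\be R^-_df(\xi) = C_d\int_0^\infty \tilde f(\cosh s\cosh r)\,\sinh^{d-1} r\, dr. \label{master} \ee

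Finally, two changes of variables yield the two stated formulas. Substituting $t = \cosh s\cosh r$ in \eqref{master}, so that $\sinh r = \sqrt{t^2-\cosh^2 s}/\cosh s$ and $\sinh^{d-1} r\, dr = (t^2 - \cosh^2 s)^{(d-2)/2}(\cosh s)^{-(d-1)}\,dt$, produces the first form \eqref{exphyp1}. For the second form, I would change variable via $\cosh\rho = \cosh s\cosh r$, which is precisely the relation $d(x_0, g_sy) = \rho$; after computing $\sinh^{d-1} r\, dr$ in terms of $\rho$ one is left with the factor $(\cosh^2\rho-\cosh^2 s)^{(d-2)/2}\sinh \rho/\cosh^{d-1} s$, and I would then verify the trigonometric identity
\be \frac{(\cosh^2\rho - \cosh^2 s)^{(d-2)/2}\sinh\rho}{\cosh^{d-1} s} = \frac{1}{\cosh s}\left(1 - \frac{\tanh^2 s}{\tanh^2\rho}\right)^{(d-2)/2}\sinh^{d-1}\rho, \nonumber \ee
which reduces to the identity $\sinh^2 r\cosh^2 s - \sinh^2 s\cosh^2 r = \sinh^2 r - \sinh^2 s = \cosh^2 r - \cosh^2 s$, renaming $\rho$ as $r$ to match \eqref{exphyp2}.

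The main conceptual work is setting up step one correctly: verifying that $g_s\xi_0$ is actually at hyperbolic distance $s$ from $x_0$, and that the compact factor of $H$ can be integrated away cleanly; once \eqref{master} is in hand, everything else is bookkeeping through hyperbolic trigonometric identities.
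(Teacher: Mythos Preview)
The paper does not supply its own proof of this lemma: it is quoted from \cite{BR2}, Lemma~3.1, with no argument given. So there is nothing in the paper to compare your proposal against.

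That said, your argument is correct and is the natural way to derive the formula. The reduction to $\xi = g_s\xi_0$ is legitimate (your computation that $d(x_0,g_s\xi_0)=s$ is right, since for $y\in\xi_0$ one has $[g_s^{-1}x_0,y]=\cosh s\cdot y_{n+1}\ge\cosh s$), the integration over the $SO(n-d)$ factor is trivial because it fixes $x_0$, and the passage from integration over $SO_0(d,1)$ to integration over $\xi_0\cong\mathbb H^d$ with measure $c_d\sinh^{d-1}r\,dr\,d\sigma_{d-1}(\omega)$ is exactly the identification $\mathbb H^d=SO_0(d,1)/SO(d)$. Your master formula $R_d^-f(\xi)=C_d\int_0^\infty\tilde f(\cosh s\cosh r)\sinh^{d-1}r\,dr$ then follows, and both changes of variable and the hyperbolic identity you check are correct.
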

For a radial function $f$ on $\mathbb H^n$ we define the Abel
transform of $f$ by $A^-_df(s)=R^-_df(\xi)$ where $s=d(x_0,\xi)$. It
is important for us to get an explicit expression of the $L^p$ norm
of $A_d^-f.$ In this regard, we first observe that for $k \in K$,
$$d(x_0,kg_u^{-1}\xi_0)=d(g_uk^{-1}x_0,\xi_0)=d(g_ux_0,\xi_0)=\mbox{inf
}_{y\in\xi_0}d(g_ux_0,y)=d(g_ux_0,x_0)=u.$$ It follows that for
radial functions $R^-_df(kg_u^{-1}\xi_0)=A^-_df(u).$ It now follows
from Lemma \ref{xidm} that if $f$ is radial function on $\mathbb
H^n$ then for $1\leq p<\infty$, \be
\int_{\Xi_d}|R^-_df(\xi)|^pd\mu(\xi)=C_{n,d}
\int_0^{\infty}|A^-_df(u)|^p(\sinh u)^{n-d-1}(\cosh u)^d
du.\label{abellp} \ee Using (\ref{abellp}) and (\ref{measure}) it is
easy to see that (\cite{BR2}, Corollary $2.4$), \be
\|A^{-}_df\|_{L^1(\Xi_d)}\leq C\|f\|_{L^1(\mathbb
H^n)}.\label{beginabelest} \ee

We will now prove a lemma which seems to be a recurring theme as far
as $L^p-L^q$ mapping property of Abel transforms is concerned. The
following lemma is implicit in the proof of (\ref{DNOsynp}) as well as in the proof of Theorem \ref{grast}. A
special case of the lemma also played a crucial role in the
end-point estimate of the horospherical Radon transform of radial
functions on rank one symmetric spaces of noncompact type
(\cite{Io}, Lemma $3$). Though we are interested in the totally geodesic $d\mbox{-}$dimensional Radon transform (instead of the horospherical Radon transform) of
radial functions on $\mathbb H^n$ and $S^n,$ it turns out that this
lemma is still an essential ingredient for the results to follow.

\begin{Lemma}
 If $\delta \neq 0$ then there exists a positive constant $C_{\delta}$ such that
 for all indicator functions of measurable subsets of $\mathbb R$ the following inequality
 holds,
\be \int_{\mathbb R}f(t)e^{\delta t}dt \leq C_{\delta}
\left(\int_{\mathbb R}f(t)e^{p\delta
t}dt\right)^{1/p}.\label{ionlemma}\ee \label{ionescu}
\end{Lemma}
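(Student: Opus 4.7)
The plan is to reduce the inequality to a one-variable rearrangement estimate via a change of variable. By the substitution $t\mapsto -t$, which carries indicator functions to indicator functions and swaps the sign of $\delta$, I may assume $\delta>0$. The case $p=1$ is trivial with $C_\delta=1$, and if the right hand side of \eqref{ionlemma} is infinite there is nothing to prove; so assume $p>1$ and write $f=\chi_E$ for some measurable $E\subset\R$ with $\int_E e^{p\delta t}\,dt<\infty$.

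Next I apply the substitution $u=e^{p\delta t}$, under which $dt=du/(p\delta u)$ and $e^{\delta t}=u^{1/p}$. If $\wtilde E\subset(0,\infty)$ denotes the image of $E$, this gives
\[\int_E e^{p\delta t}\,dt = \frac{1}{p\delta}\,|\wtilde E|,\qquad \int_E e^{\delta t}\,dt=\frac{1}{p\delta}\int_{\wtilde E} u^{1/p-1}\,du,\]
so \eqref{ionlemma} is equivalent to the estimate $\int_{\wtilde E} u^{1/p-1}\,du\leq C\,|\wtilde E|^{1/p}$ with a constant depending only on $p$.

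Since $p>1$, the weight $u^{1/p-1}$ is strictly decreasing and nonnegative on $(0,\infty)$, so by the bathtub principle the integral of $u^{1/p-1}$ over a measurable set $A$ of fixed Lebesgue measure $|A|$ is maximized when $A$ is taken to be the interval $[0,|A|]$. Applying this to $\wtilde E$ gives
\[\int_{\wtilde E} u^{1/p-1}\,du\leq\int_0^{|\wtilde E|}u^{1/p-1}\,du=p\,|\wtilde E|^{1/p},\]
and tracking the constants back through the change of variable yields \eqref{ionlemma} with $C_\delta=(p\delta)^{1/p}/\delta$. The only nontrivial step is the rearrangement inequality for decreasing weights; the rest is bookkeeping, so I do not expect any real obstacle.
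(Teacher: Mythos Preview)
Your argument is correct and in fact cleaner than the paper's. Both proofs begin with an exponential change of variable to transform the inequality into a statement about an indicator function on $(0,\infty)$; the paper uses $s=e^{\delta t}$ and reduces to $\int_0^\infty\phi(s)\,ds\le C\bigl(\int_0^\infty\phi(s)s^{p-1}\,ds\bigr)^{1/p}$, whereas you use $u=e^{p\delta t}$ and reduce to $\int_{\wtilde E}u^{1/p-1}\,du\le C\,|\wtilde E|^{1/p}$. The substantive difference is in how the reduced inequality is handled: the paper first proves it for finite disjoint unions of intervals by appealing to the combinatorial Lemma~\ref{lemma}, then extends to open sets by monotone convergence, and finally passes to arbitrary measurable sets via outer approximation by open sets. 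Your use of the Hardy--Littlewood (bathtub) rearrangement inequality against the decreasing weight $u^{1/p-1}$ replaces all three steps at once and works directly for general measurable sets, yielding the explicit constant $C_\delta=(p\delta)^{1/p}/\delta$ without any approximation. The paper's route has the advantage of reusing Lemma~\ref{lemma}, which is needed elsewhere; yours has the advantage of being self-contained and avoiding the measure-theoretic bookkeeping.
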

\begin{proof}
Using the change of variable $e^{\delta t}=s$ it suffices to prove
the inequality \be \int_0^{\infty}\phi (s)ds\leq
C_{\delta}\left(\int_{0}^{\infty}\phi
(s)s^{p-1}ds\right)^{1/p},\label{ionreduced} \ee for all indicator
functions $\phi$ of measurable subsets of $(0,\infty).$ We first
consider the case when $\phi$ is of the form
$\chi_{\cup_{i=1}^k[a_i,b_i]}$ with $a_1<b_1\leq
a_2<b_2\leq\cdots\leq a_k<b_k.$ For functions of this form
(\ref{ionreduced}) follows immediately from (\ref{basic}) by
considering $x_1=b_k, x_2=a_k,\ldots, x_{2k-1}=b_1, x_{2k}=a_1.$
Since every nonempty open subset of $\R$ is a countable disjoint
union of open intervals, the result holds for indicator function of
open sets by monotone convergence theorem. Now suppose that $E$ is
a measurable subset of $(0,\infty).$ By monotone convergence theorem
it suffices to prove the result for $E\cap (0,m)$ for each
$m\in\mathbb N.$ So without loss of generality we can assume that
$E\subset (0,\lambda)$ for some $\lambda>0.$ By regularity of the
Lebesgue measure we can get a decreasing family of open sets
$\{U_n\}$ such that $E \subset U_n\subset (0,\lambda)$ for each $n$
and $|U_n\setminus E|<\frac{1}{n}$ (here $|E|$ denotes Lebesgue
measure of the set $E$). So \be |E|=|U_n|-|U_n\setminus
E|\leq \left(\int_0^{\infty}\chi_{U_n}(s)s^{p-1}\
ds\right)^{1/p}-|U_n\setminus E|.\label{lemmastep}\ee It now suffices
to show that
$$ \lim_{n\rightarrow\infty}\int_0^{\infty}\chi_{U_n}(s)s^{p-1}\
ds=\int_0^{\infty}\chi_{E}(s)s^{p-1}\ ds.$$ If $d\nu(s)=s^{p-1}ds$
then
\begin{eqnarray*}
&&\lim_{n\rightarrow\infty}\int_0^{\infty}\chi_{U_n}(s)s^{p-1}\
ds-\int_0^{\infty}\chi_{E}(s)s^{p-1}\ ds\\&=&
\lim_{n\rightarrow\infty}\int_0^{\infty}\chi_{U_n\setminus
E}(s)s^{p-1}ds\\&=&\nu\left(\cap_{n=1}^{\infty}(U_n\setminus
E)\right).\end{eqnarray*} Since Lebesgue measure of the set
$\cap_{n=1}^{\infty}(U_n\setminus E)$ is zero it follows that
$$\lim_{n\rightarrow\infty}\int_0^{\infty}\chi_{U_n\setminus E}(s)s^{p-1}ds=0.$$
\end{proof}
\begin{Remark}{\em Using the substitution t=logs it follows from (\ref{ionlemma}) that
 \be \int_0^{\infty}\phi (s)\  s^{\delta-1}\ ds\leq
C_{\delta}\left(\int_{0}^{\infty}\phi (s)\ s^{p\delta-1}\
ds\right)^{1/p}. \label{ioneskure}\ee  }\label{ioneskure}
\end{Remark}
We are now in a position to state and prove the main result of
 this section.
 \bt
 \begin{enumerate}
 \item[a)] If $n\geq 3$ and $2\leq d \leq n-1$ then there exists a constant $C>0$ such that for all measurable, radial
 functions $f$ on $\mathbb H^n$ the following inequality holds,
 \be
 \|A^-_df\|_{L^{\infty}(\Xi_d)} \leq C \|f\|_{L^{\frac{n-1}{d-1},1}(\mathbb H^n)}.\label{endabelest}
 \ee
 \item[b)] If $n\geq 2$ and
$1\leq d\leq n-1$ then there exists a constant $C>0$ such
that for all measurable radial functions on $\mathbb H^n$ the
following inequality holds, \be \|A^-_df\|_{L^q(\Xi_d)}\leq C
\|f\|_{L^p(\mathbb H^n)},\label{endresult}\ee where
\be\frac{n-1}{p}=d-1+\frac{n-d}{q},\quad 1\leq
p<\frac{n-1}{d-1}.\label{endp}\ee \end{enumerate} \label{endabel}
 \et
 \begin{proof} We will first prove part $a)$ of the theorem. As in Theorem \ref{grast} it is enough to prove the result for
 indicator functions of all measurable, radial subsets of $\mathbb H^n$ (see
 \cite{SW3}, Theorem 3.13 ).
 We will show that there exists
 a constant $C>0$ such that
 \be |\cosh (s) A^-_df(s)|\leq C \left(\int_{\mathbb H^n} f(x)\ dx\right)^{\frac{d-1}{n-1}}
 = C \left(\int_0^{\infty} \tilde{f}(\cosh t) \sinh ^{n-1} t\ dt
 \right)^{\frac{d-1}{n-1}},\label{firstend}\ee
 where $s\in (0,\infty)$ and $f$ is indicator function of a radial,
 measurable subset of $\mathbb H^n.$ Using (\ref{exphyp2}) we write
 \be
\cosh (s) A^-_df(s)=\sigma_{d-1} \int_s^{\infty} \tilde{f}(\cosh t)
\left(1-\frac{\tanh^2s}{\tanh^2t}\right)^{\frac{d-2}{2}}
 \sinh^{d-1} t\ dt=
 I_1+I_2,\nonumber
 \ee
 where
 \bea
 I_1&=&\sigma_{d-1}\int_s^{s + \alpha} \tilde{f}(\cosh t) \left(1-\frac{\tanh^2s}{\tanh^2t}\right)^{\frac{d-2}{2}}
 \sinh^{d-1} t\ dt,\\
 I_2&=&\sigma_{d-1}\int_{s + \alpha}^{\infty} \tilde{f}(\cosh t) \left(1-\frac{\tanh^2s}{\tanh^2t}\right)^{\frac{d-2}{2}}
 \sinh^{d-1} t\ dt,
 \eea
and $\alpha=1/2$.
We will estimate $I_1$ and $I_2$ separately.
 For $t\in (s + \alpha,\infty)$ we have $\tanh s < \tanh t$ and $\sinh t\asymp e^t$. So
\be
 I_2\leq
  C \int_{s + \alpha}^{\infty} \tilde{f}(\cosh t) e^{(d-1)t}\ dt.
\ee We now appeal to Lemma \ref{ionescu} with $\delta=d-1$ and
$p=\frac{n-1}{d-1}$ to get \bea
  \int_{s + \alpha}^{\infty} \tilde{f}(\cosh t) e^{(d-1)t}\ dt&\leq &C\left(\int_{s + \alpha}^{\infty} \tilde{f}(\cosh t)
  e^{(n-1)t} dt \right)^{\frac{d-1}{n-1}}\nonumber\\
  &\asymp & C \left(\int_{s + \alpha}^{\infty} \tilde{f}(\cosh t) \sinh^{n-1}t\  dt
  \right)^{\frac{d-1}{n-1}}\nonumber\\
  &\leq &C \|f\|_{L^{\frac{n-1}{d-1},1}(\mathbb H^n)}.
  \eea
 By similar argument as above we can prove the required estimate for $I_1$ if $s \geq \alpha.$ We will now estimate $I_1$ when $0<s<\alpha.$
 We first observe that by mean value theorem that there exists a real number $u\in (s,t)$
such that
  \bea
  &&\int_s^{s + \alpha} \tilde{f}(\cosh t) (\tanh^2t-\tanh^2s)^{\frac{d-2}{2}} \coth^{d-2} t\ \sinh^{d-1} t\ dt\nonumber\\
  &&=\int_s^{s + \alpha} \tilde{f}(\cosh t) (t-s)^{\frac{d-2}{2}}
  \left(\frac{2\tanh u }{\cosh^{2} u}\right)^{\frac{d-2}{2}} \frac{ \sinh^{d-1} t}{\tanh^{d-2} t}\ dt \label{mvt}
  \eea

  If $0<r<1$ then we have $\sinh r \asymp r$ and $\cosh r \asymp 1$. As $s<u<t<s+\alpha<1$ and $d\geq 2$ it follows from
 the Remark \ref{ioneskure} and (\ref{mvt}) that
\bea
I_1&\leq&C \int_s^{s + \alpha} \tilde{f}(\cosh t) (t-s)^{\frac{d-2}{2}} (u)^{\frac{d-2}{2}}  t\ dt\nonumber\\
   &\leq&C \int_s^{s + \alpha} \tilde{f}(\cosh t) (t-s)^{\frac{d-2}{2}} t^{\frac{d}{2}}\ dt\nonumber\\
   &\leq&C \int_s^{s + \alpha} \left(\tilde{f}(\cosh t)t^{d-1}\right) \left((t-s)^{\frac{d-2}{2}} t^{\frac{2-d}{2}}\right)\ dt\nonumber\\
   &\leq&C \int_s^{s + \alpha}\tilde{f}(\cosh t) t^{d-1} \ dt\nonumber\\
   &\leq&C \int_s^{s + \alpha}\tilde{f}(\cosh t) t^{n(d-1)/(n-1)-1} \ dt \nonumber\\
   &\leq&C \left(\int_s^{s + \alpha}\tilde{f}(\cosh t) t^{n-1}\ dt\right)^{\frac{d-1}{n-1}}\nonumber\\
   &\leq&C \left(\int_s^{s + \alpha} \tilde{f}(\cosh t) \sinh^{n-1}t
   dt\right)^{\frac{d-1}{n-1}}\nonumber\\
   &\leq&C \|f\|_{L^{\frac{n-1}{d-1},1}(\mathbb H^n)}.
  \eea
  This proves (\ref{firstend}). As $\cosh s\geq 1$ this completes the proof of $a)$.

The proof of $b),$ for $2\leq d \leq n-1,$ follows by interpolating
(\cite{Graf}, Theorem $1.4.19$) between the estimates
(\ref{endabelest}) and (\ref{beginabelest}).

 For $d=1,$ the relation (\ref{endp}) shows that we need to prove the inequality
\be
 \|A^-_1f\|_{L^{p}(\Xi_d)} \leq C \|f\|_{L^{p}(\mathbb H^n)},\quad 1 \leq
 p<\infty.
 \ee
We first write the formula for $A^-_1f$ given
 by (\ref{exphyp1}) as follows
$$A^-_1f(s)=C\int_{\cosh s}^{\infty} \tilde{f}(t)(t^2-\cosh^2s)^{-\frac{1}{2}}dt.$$
From (\ref{abellp}) we have \bea \|A^-_1f\|_{L^p(\Xi_1)}&=&C
\left(\int_0^{\infty} |A^-_1f(s)|^p \sinh^{n-2} s \cosh s \
ds\right)^{1/p}\nonumber\eea
 \bea &=&C \left(\int_0^{\infty}
\left(\int_{\cosh s}^{\infty}
\tilde{f}(t)(t^2-\cosh^2s)^{-\frac{1}{2}}\ dt\right)^p \sinh^{n-2} s
\cosh s \ ds\right)^{1/p}, \nonumber\eea (see Lemma \ref{rdlem}). We
note that $\tilde{f}$ is a positive function. Using the substitution
$\cosh s =r$, we get \bea \|A^-_1f\|_{L^p(\Xi_1)}&=&C
\left(\int_1^{\infty} \left(\int_{r}^{\infty}
\tilde{f}(t)(t^2-r^2)^{-\frac{1}{2}}dt\right)^p
(r^2-1)^{\frac{n-3}{2}} r dr\right)^{1/p}\nonumber \eea Again, using
the change of variables $t^2=v+1, r^2=u+1$, we get \bea
\|A^-_1f\|_{L^p(\Xi_1)}&=&C \left(\int_0^{\infty}
\left(\int_{u}^{\infty} \tilde{f}\left(\sqrt {v+1}\right)(v-u)^{-\frac{1}{2}} (v+1)^{-\frac 12}\ dv\right)^p \ u^{\frac{n-3}{2}} du\right)^{1/p}\nonumber\\
&=&C\left(\int_0^{\infty} \left(\int_{0}^{\infty}
\tilde{f}\left(\sqrt {v+1}\right)(v+1)^{-\frac {1}{2p}}v^{\frac
{n}{2p}}
\left(\frac{v}{1+v}\right)^{\frac {1}{2p^{\prime}}}\right.\right.\nonumber\\
&&\times
\left.\left.\left(1-\frac{u}{v}\right)^{-\frac{1}{2}}({u}/{v})^{\frac{n-1}{2p}}\chi_{[0,1]}({u}/{v})\frac{dv}{v}\right)^p
 \frac{du}{u}\right)^{1/p}\nonumber\\
&\leq&C\left(\int_0^{\infty} \left(\int_{0}^{\infty} \tilde{f}\left(\sqrt {v+1}\right)(v+1)^{-\frac {1}{2p}}v^{\frac{n}{2p}}\right.\right.\nonumber\\
&&\times
\left.\left.\left(1-\frac{u}{v}\right)^{-\frac{1}{2}}({u}/{v})^{\frac{n-1}{2p}}\chi_{[0,1]}({u}/{v})\
\frac{dv}{v}\right)^p \frac{du}{u}\right)^{1/p},\quad
\left(\mbox{as}\ \frac{v}{1+v} <1 \right).\nonumber \eea By Young's
inequality for the group $(0,\infty),$ we get, \bea
\|A^-_1f\|_{L^p(\Xi_1)}&\leq&C\left(\int_0^{\infty}
\left(\tilde{f}\left(\sqrt {v+1}\right)\right)^p
\ (v+1)^{-\frac 12}v^{\frac n2}\frac{dv}{v}\right)^{1/p}\nonumber\\
&&\quad \times \left(\int_0^{\infty}
(1-u)^{-\frac{1}{2}}u^{\frac{n-1}{2p}}\chi_{[0,1]}(u)\frac{du}{u}\right)
\nonumber\\
& \leq&C\left(\int_1^{\infty} (\tilde{f}(s))^p (s^2-1)^{\frac{n-2}{2}} ds\right)^{1/p}\nonumber\\
&&\qquad (\mbox{by using }v+1=s^2 )\nonumber\\
& \leq&C\left(\int_0^{\infty} (\tilde{f}(\cosh r))^p \sinh^{n-1}r dr \right)^{1/p}\nonumber\\
&& \quad (\mbox{by using}\ s=\cosh r) \nonumber\\
&=& C\|f\|_{L^p(\mathbb H^n)} \eea This completes the proof.
 \end{proof}
\begin{Remark} {\em  Though Theorem \ref{endabel} is analogous
to the corresponding result on Euclidean space (\cite{DNO},  Theorem
 $1$) but it does not seem to reveal the full story. It turns out that the
$L^p-L^q$ mapping property of the Abel transform on $\mathbb H^n$ is
very different from that of Euclidean space. In this regard, we
observe that if $d\geq 2$ then (\ref{firstend}) shows that if $f\in
L^{\frac{n-1}{d-1},1}(\mathbb H^n)$ then $$|A^-_df(s)|\leq C_f(\cosh
s)^{-1},\quad s>0,$$ where $C_f$ is a constant multiple of
$\|f\|_{L^{\frac{n-1}{d-1},1}(\mathbb H^n)}.$ Since the function
$g(s)=1/\cosh s$ decays like $e^{-s}$ at infinity and the
$G\mbox{-}$invariant measure on $\Xi_d$ grows like $e^{(n-1)s}$ at
infinity (see Lemma \ref{xidm}) it follows that $g \in
L^{n-1,\infty}(\Xi_d,\mu).$ As a consequence, we have \be
\|A^-_df\|_{L^{n-1,\infty}(\Xi_d)}\leq
C\|f\|_{L^{\frac{n-1}{d-1},1}(\mathbb H^n)}. \label{secondend} \ee }
\end{Remark}
\begin{Corollary}
If $ 2\leq d\leq n-1$, $1\leq p<\frac{n-1}{d-1}$ and $\frac{n-\kappa}{p}=(d-\kappa)+\frac{n-d}{q_{\kappa}},$ then for all
$\kappa\in[1,2]$ the following inequality holds, \be
\|A^-_df\|_{{q_{\kappa}}(\Xi_d)}\leq C_{\kappa}\|f\|_{L^p(\mathbb
H^n)},\ee
for all radial measurable function on $\mathbb H^n.$ \label{coroh}
\end{Corollary}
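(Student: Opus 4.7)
The plan is to derive the entire family of inequalities by two successive interpolations from material already established in the paper. The ingredients are: the trivial estimate $\|A^-_df\|_{L^1(\Xi_d)}\le C\|f\|_{L^1(\mathbb H^n)}$ from \eqref{beginabelest}, the $\kappa=1$ strong family produced by Theorem \ref{endabel}(b), and the refined weak endpoint $\|A^-_df\|_{L^{n-1,\infty}(\Xi_d)}\le C\|f\|_{L^{(n-1)/(d-1),1}(\mathbb H^n)}$ of \eqref{secondend}.

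First I would produce the $\kappa=2$ strong family by applying the off-diagonal Marcinkiewicz interpolation theorem (\cite{Graf}, Theorem $1.4.19$) to the weak-type endpoints $(p_0,q_0)=(1,1)$ coming from \eqref{beginabelest} and $(p_1,q_1)=(\tfrac{n-1}{d-1},n-1)$ coming from \eqref{secondend}. This is the same recipe already used to obtain Theorem \ref{endabel}(b) from \eqref{beginabelest} and \eqref{endabelest}, and the hypotheses $p_i\le q_i$ and $q_0\ne q_1$ are satisfied because $d\ge 2$ forces $\tfrac{n-1}{d-1}\le n-1$ and $n\ge 3$. For $1\le p<\tfrac{n-1}{d-1}$ the interpolation parameter $\theta$ is determined by $\tfrac{1}{p}=1-\theta\tfrac{n-d}{n-1}$, and then $\tfrac{1}{q_2}=1-\theta\tfrac{n-2}{n-1}$; eliminating $\theta$ yields exactly $\tfrac{n-2}{p}=(d-2)+\tfrac{n-d}{q_2}$, which is the corollary at $\kappa=2$.

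Next, for $\kappa\in(1,2)$, fix $p\in[1,\tfrac{n-1}{d-1})$ and interpolate between the two strong estimates $A^-_d:L^p\to L^{q_1}$ from Theorem \ref{endabel}(b) and $A^-_d:L^p\to L^{q_2}$ just obtained. Linearity of $A^-_d$ allows one to appeal to Riesz--Thorin, which gives $A^-_d:L^p\to L^{q_\kappa}$ with $\tfrac{1}{q_\kappa}=\tfrac{2-\kappa}{q_1}+\tfrac{\kappa-1}{q_2}$. Substituting the two exponent relations for $q_1$ and $q_2$ and simplifying collapses this identity to $\tfrac{n-\kappa}{p}=(d-\kappa)+\tfrac{n-d}{q_\kappa}$, which is the stated inequality.

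The whole argument is exponent bookkeeping and introduces no new analytic idea. The only point that needs a bit of care is in Step~1, where one has to use Marcinkiewicz interpolation in the Lorentz-source form so as to accept \eqref{secondend} as an endpoint even though its source space is $L^{(n-1)/(d-1),1}$ rather than $L^{(n-1)/(d-1)}$; this is precisely the same refinement already invoked in the proof of Theorem \ref{endabel}(b), so no new machinery is required.
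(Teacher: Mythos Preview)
Your proposal is correct and follows essentially the same route as the paper: first obtain the $\kappa=2$ family by Marcinkiewicz interpolation between \eqref{beginabelest} and \eqref{secondend}, then for fixed $p$ interpolate between the resulting $L^p\to L^{q_1}$ and $L^p\to L^{q_2}$ bounds. The only cosmetic difference is that the paper phrases the second step via the log-convexity inequality $\|g\|_{L^{q_\kappa}}\le C\|g\|_{L^{q_2}}^{\alpha}\|g\|_{L^{q_1}}^{1-\alpha}$ (\cite{Graf}, Proposition~1.1.14) rather than Riesz--Thorin, which amounts to the same thing when the source space is fixed.
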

\begin{proof}
From (\ref{endp}) we have \be \|A^-_df\|_{L^q(\Xi_d)}\leq C
\|f\|_{L^p(\mathbb H^n)},\label{firstq_1}\ee
\be\frac{n-1}{p}=d-1+\frac{n-d}{q},\quad 1\leq
p<\frac{n-1}{d-1}.\label{q_1}\ee By interpolating between the
estimates (\ref{beginabelest}) and (\ref{secondend}) we get \be
\|A^-_df\|_{L^q(\Xi_d)}\leq C \|f\|_{L^p(\mathbb
H^n)},\label{firstq_2}\ee \be\frac{n-2}{p}=d-2+\frac{n-d}{q},\quad
1\leq p<\frac{n-1}{d-1}.\label{q_2}\ee If we fix $p\in
(1,\frac{n-1}{d-1})$ then there exist $q_1$ and $q_2$ such that
$$\|A^-_df\|_{L^{q_j}(\Xi_d)}\leq C \|f\|_{L^p(\mathbb H^n)},\qquad j=1,2,$$
where $q_1$ and $q_2$ are given by (\ref{q_1}) and (\ref{q_2})
respectively. It is easy to see from above that $q_2<q_1.$ Hence
elements of $[q_2,q_1]$ can be written as
$$\frac{1}{q_{\kappa}}=\frac{2-\kappa}{q_1}+\frac{\kappa-1}{q_2},\quad \kappa\in [1,2].$$
It is now easy to see that $q_{\kappa}$ satisfies the relation,
$$\frac{n-\kappa}{p}=(d-\kappa)+\frac{n-d}{q_{\kappa}}.$$ Since
$q_2<q_{\kappa}<q_1$ we have (see \cite{Graf}, Proposition $1.1.14$)
$$\|A^-_df\|_{L^{q_{\kappa}}(\Xi_d)}\leq C_{p,q_1,q_2}\|A^-_df\|_{L^{q_2}(\Xi_d)}^{\frac{\frac{1}{q_{\kappa}}-
\frac{1}{q_1}}{\frac{1}{q_2}-\frac{1}{q_1}}}
\|A^-_df\|_{L^{q_1}(\Xi_d)}^{\frac{\frac{1}{q_2}-\frac{1}{q_{\kappa}}}{\frac{1}{q_2}-\frac{1}{q_1}}}.$$
The result now follows by applying (\ref{firstq_1}) and
(\ref{firstq_2}).
\end{proof}

\section[The Sphere]{The Sphere}
\subsection[Notation and Preliminaries]{Notation and Preliminaries}
 In this section we will discuss
about the notion of the $d\mbox{-}$dimensional totally geodesic
Radon transform on the unit sphere. Since the situation here is
analogous to that of $\mathbb H^n$ our exposition will be brief. We
start with a few notation. Let \bea S^n&=&\left\{x=(x_1,\ldots
,x_{n+1})\in\R^{n+1}:
\ds \sum_{i=1}^{n+1}x_i^2=1\right\},\nonumber\\
R^{n-d}&=&\mbox{span }\{e_1,\ldots ,e_{n-d}\},\nonumber\\
R^{d+1}&=&\mbox{span }\{e_{n-d+1},\ldots ,e_{n+1}\},\nonumber\\
\xi_0&=&R^{d+1}\cap S^n=S^d,~~~x_0=e_{n+1},\nonumber \eea where
$\{e_1,\ldots ,e_{n+1}\}$ is the standard orthonormal basis of
$\R^n$ and $1\leq d\leq n-1.$ We note that the situation here is
little different from that of $\mathbb H^n,$ in the sense that $x\in
\xi_0$ if and only if $-x\in \xi_0.$ The compact Lie group
$G=SO(n+1)$ acts transitively on $S^n$ and the isotropy subgroup at
$x_0$ is given by
$$K=\left\{\left(\begin{array}{cc}k &0\\0 &1
\end{array}\right): k\in SO(n)\right\} \approx SO(n).$$ Hence $S^n$ can be viewed as the homogeneous space $G/K.$

It is known that all $d\mbox{-}$dimensional totally geodesic
submanifolds of $S^n$ are intersections of $S^n$ with
$(d+1)\mbox{-}$dimensional subspaces of $\R^{n+1}$ (\cite{Rj}, p.
$40$). Hence the set of $d\mbox{-}$dimensional totally geodesic
submanifolds of $S^n$ can be parametrized by $G_{n+1,d+1}.$ We note
that $\xi_0\in G_{n+1,d+1}$ and for every $\xi\in G_{n+1,d+1},$
$x\in\xi$ if and only if $-x\in\xi.$ The group $G$ also acts
transitively on $G_{n+1,d+1}$ with the isotropy subgroup at $\xi_0$
given by
$$H=\left\{h=\left(\begin{array}{cc}S &0\\0 &T
\end{array}\right): T\in SO(d+1), S\in SO(n-d)\right\}\approx SO(n-d)\times SO(d+1).$$
Thus $G_{n+1,d+1}$ is also a homogeneous space of the group
$SO(n+1),$ namely, $G_{n+1,d+1}\approx G/H$ (\cite{R3}, p. $78$). In
view of the above discussion it is now easy to see that if $\xi\in
G_{n+1,d+1}$ then there exists a $g(\xi)\in SO(n)$ (which is unique
modulo $H$) such that

$$ \xi_0=\{hx_0:h\in H\},\quad\xi=g(\xi)\xi_0=\{g(\xi)hx_0:h\in H\}.$$ We are now in a position to define the
notion of $d\mbox{-}$dimensional totally geodesic Radon transform
($d\mbox{-}$plane transform) on the sphere.
\begin{Definition}
Given a continuous function $f$ defined on $S^n$ we define the
$d\mbox{-}$dimensional totally geodesic Radon transform of $f$ as
\be R^+_df(\xi)=\int_Hf(g(\xi)hx_0)\ d h,\qquad\xi\in
G_{n+1,d+1}\label{radonsp} \ee where $dh$ stands for the normalized
Haar measure on the compact group $H.$
\end{Definition}

Using the identification of $\xi_0$ with $S^d$ one can see that
(\ref{radonsp}) can also be written as\be
R^+_df(\xi)=\int_{S^d}f(g(\xi)y)\ d\sigma_d(y).\label{radonsp1}\ee
\begin{Remark}
{\em We note the following important difference between
$d\mbox{-}$plane transform on $\mathbb H^n$ and $S^n.$ As $\xi$ is
invariant under reflection about the origin it follows from
(\ref{radonsp}) that $R^+_df(\xi)=0$ for all $\xi\in G_{n+1,d+1}$ if
$f$ is an odd function.\label{odd}}
\end{Remark}
We will now specialize to the class of radial functions on $S^n$. An
explicit formula for $d\mbox{-}$plane transform of radial functions
appear in \cite{R3} (see also \cite{He2}). For the sake of
completeness we explain it in some detail.
\begin{Definition}
A function $f$ defined on $S^n$ is called radial (or zonal) if for
all $x\in S^n$ and for all $k\in K,$ $f$ satisfies the condition
$f(kx)=f(x).$
\end{Definition}
To understand the radial functions we use the notion of polar
coordinate on the sphere. Every element $x\in S^n,$ with $x\neq x_0$
can be uniquely written as \be x=\rho\sin\theta
+x_0\cos\theta,\qquad \rho\in S^{n-1},
0<\theta\leq\pi,\label{polarsp}\ee or equivalently
$x=k.(\sin\theta e_1+x_0\cos\theta),\qquad k\in K.$ For radial
functions $f$ it now follows that $f(x)=f(\rho\sin\theta
+x_0\cos\theta)=f(\sin\theta e_1+x_0\cos\theta).$ Hence a radial
functions $f$ on $S^n$ can be thought of as a function $\tilde{f}$ on the interval
$[0,\pi]$ given by the relation \be \tilde{f}(\cos\theta)=f(x)=f(\rho\sin\theta
+x_0\cos\theta).\label{radialsp}\ee The Riemannian metric on $S^n$
is given by $d_2(x,y)=\cos^{-1}(\langle x,y\rangle),$ (\cite{Rj},
p. $36$). Since $d_2(\rho\sin\theta +x_0\cos\theta,x_0)=\theta,$ it
follows that a radial function on $S^n$ is actually a function of
the distance of a point from $x_0.$
To proceed further we need the notion of bispherical coordinate on
$S^n$ (\cite{VK}, p. $23$). If $1\leq d\leq n-1$ then every $x\in
S^n$ can be written as \be x=\eta\cos\theta
+\zeta\sin\theta,\label{bipolarsp}\ee where $\eta\in S^d=R^{d+1}\cap
S^n,$ $\zeta\in S^{n-d-1}=R^{n-d}\cap S^n,$ and $\theta\in
[0,\frac{\pi}{2}]$. In these coordinates the $G$ invariant measure
on $S^n$ is given by \be dx=\sin^{n-d-1}\theta \cos^{d}\theta \
d\theta \ d\eta\  d\zeta,\label{measurehypersp}\ee where $d\eta$ and
$d\zeta$ denote the normalized rotation invariant measures on $S^d$
and $S^{n-d-1}$ respectively (\cite{VK},p. $12,$ $22$). Now, suppose
that $f$ is a radial $C^{\infty}$ function on $S^n.$ Then for
$\xi\in G_{n+1,d+1}$ we have from (\ref{radonsp1}) \bea
R_d^+f(\xi)&=&\int_{S^d}f(g(\xi)y)\ d\sigma_d(y)\nonumber\\
&=&\int_{S^d}\tilde{f}(\langle g(\xi)y,x_0\rangle)\ d\sigma_d(y)\nonumber\\
&=&\int_{S^d}\tilde{f}(\langle y,
g(\xi)^{-1}x_0\rangle)\ d\sigma_d(y)\nonumber\\
&=&\int_{S^d}\tilde{f}(\langle y,\eta\cos\theta\rangle )\
d\sigma_d(y).\nonumber \eea In the last step we have used the
bispherical representation
$g(\xi)^{-1}x_0=\eta\cos\theta+\zeta\sin\theta$ (see
(\ref{bipolarsp})).
To make the above formula more explicit we will need the catalan formula which is described below. let $\psi$ be a function defined on $\R$ and let $x\in \R^d$, $d\geq 2$. Then
\be
\int_{S^d}\psi(\langle x,\omega\rangle) d\sigma\omega=C_d\int_{-1}^1\psi (s\|x\|)(1-s^2)^{\frac{d-2}{2}}ds.\label{catalan}
\ee
For proof of this formula we refer the reader to \cite{Graf}, D.3.
Thus \bea
R_d^+f(\xi)&=&\int_{S^d}\tilde{f}(\cos\theta\langle y,\eta\rangle
)\ d\sigma_d(y)\nonumber\\
&=&C\int_{-1}^1\tilde{f}(\cos\theta s)(1-s^2)^{\frac{d-2}{2}}\ ds\quad\mbox{(by using \eqref{catalan})}\nonumber\\
&=&C\int_{-\cos\theta}^{\cos\theta}\frac{\tilde{f}(u)}{\cos\theta}\left(1-\frac{u^2}{\cos^2\theta}\right)^{\frac{d-2}{2}}\
du
\quad\mbox{(using $s\cos\theta=u$)}\nonumber\\
&=&\frac{C}{(\cos\theta
)^{d-1}}\int_{-\cos\theta}^{\cos\theta}\tilde{f}(u)(\cos^2\theta-u^2)^{\frac{d-2}{2}}\
du. \label{abelsp} \eea It follows from (\ref{abelsp}) that for
radial functions $R_d^+f$ is a function of $\theta$ only.

In the following the $d\mbox{-}$plane transform of radial functions
$f$ will be denoted by $A_d^+f$ and will be called the Abel
transform of $f.$
\begin{Lemma}
If $f\in C^{\infty}(S^n)$ is an even, radial function then for
$\theta\in [0,\frac{\pi}{2}],$ \bea A^+_df(\theta)&=&
\frac{C}{(\cos\theta
)^{d-1}}\int_0^{\cos\theta}\tilde{f}(u)(\cos^2\theta-u^2)^{\frac{d-2}{2}}du,\label{abelsph1}\\
&=&\frac{C}{\cos\theta}\int_{\theta}^{{\pi}/{2}}\tilde{f}(\cos
r)\left(1-\frac{\tan^2\theta}{\tan^2r}\right)^{\frac{d-2}{2}}\sin^{d-1}
rdr.\label{abelsph2} \eea\label{abelspL}
\end{Lemma}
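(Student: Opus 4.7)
The plan is to start from the formula already derived just before the lemma statement, namely \eqref{abelsp}:
\[
A_d^+f(\theta)=R_d^+f(\xi)=\frac{C}{(\cos\theta)^{d-1}}\int_{-\cos\theta}^{\cos\theta}\tilde f(u)(\cos^2\theta-u^2)^{\frac{d-2}{2}}\,du,
\]
and to obtain \eqref{abelsph1} by exploiting evenness and \eqref{abelsph2} by a change of variable plus a trigonometric identity. No analytic input beyond these two manipulations is required.

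First I would verify that the evenness of $f$ (i.e., $f(-x)=f(x)$) together with radiality implies that $\tilde f:[-1,1]\lgra \C$ is an even function. Indeed, using the polar representation $x=\rho\sin\theta+x_0\cos\theta$ from \eqref{polarsp}, one has $-x=(-\rho)\sin(\pi-\theta)+x_0\cos(\pi-\theta)$ with $-\rho\in S^{n-1}$, so $\tilde f(-\cos\theta)=f(-x)=f(x)=\tilde f(\cos\theta)$. Consequently $u\mapsto \tilde f(u)(\cos^2\theta-u^2)^{(d-2)/2}$ is even on $[-\cos\theta,\cos\theta]$, and the integral in \eqref{abelsp} equals twice the integral over $[0,\cos\theta]$. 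Absorbing the factor $2$ into the constant $C$ yields \eqref{abelsph1}.

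Next, to pass from \eqref{abelsph1} to \eqref{abelsph2}, I would substitute $u=\cos r$, $du=-\sin r\,dr$; the limits $u=0$ and $u=\cos\theta$ become $r=\pi/2$ and $r=\theta$. This gives
\[
A_d^+f(\theta)=\frac{C}{(\cos\theta)^{d-1}}\int_\theta^{\pi/2}\tilde f(\cos r)(\cos^2\theta-\cos^2 r)^{\frac{d-2}{2}}\sin r\,dr.
\]
The remaining task is an algebraic identity. Using $1-\cos^2 r=\sin^2 r$ and $1-\cos^2\theta=\sin^2\theta$ one checks directly that
\[
\cos^2\theta-\cos^2 r=\cos^2\theta\sin^2 r-\sin^2\theta\cos^2 r=\cos^2\theta\sin^2 r\left(1-\frac{\tan^2\theta}{\tan^2 r}\right),
\]
so that
\[
(\cos^2\theta-\cos^2 r)^{\frac{d-2}{2}}=(\cos\theta)^{d-2}(\sin r)^{d-2}\left(1-\frac{\tan^2\theta}{\tan^2 r}\right)^{\frac{d-2}{2}}.
\]
Substituting this in and collecting the factor $(\cos\theta)^{d-2}(\sin r)^{d-2}\sin r=(\cos\theta)^{d-2}\sin^{d-1}r$ against the prefactor $(\cos\theta)^{-(d-1)}$ yields exactly $\frac{1}{\cos\theta}\,\sin^{d-1}r\,(1-\tan^2\theta/\tan^2 r)^{(d-2)/2}$, which is \eqref{abelsph2}.

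There is no real obstacle here; the only subtle point is the initial reduction that uses $\tilde f$ being even on $[-1,1]$ (which is where the hypothesis that $f$ is even on $S^n$ is actually used; in Remark~\ref{odd} the authors point out that dropping this would force the odd part of $f$ to contribute nothing to $R_d^+f$ anyway). Everything else is a change of variable and the trigonometric identity above. One should note the mild restriction $\theta\in[0,\pi/2)$ so that $\cos\theta\neq 0$; the endpoint $\theta=\pi/2$ can then be handled by a limiting argument, since the integral in \eqref{abelsph1} vanishes at $\theta=\pi/2$.
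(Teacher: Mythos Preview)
Your proposal is correct and follows exactly the paper's approach: the paper's proof simply says that the first identity follows from \eqref{abelsp} by evenness of $f$, and the second from the substitution $u=\cos r$. You have merely supplied the details (the verification that $\tilde f$ is even, the trigonometric identity $\cos^2\theta-\cos^2 r=\cos^2\theta\sin^2 r(1-\tan^2\theta/\tan^2 r)$, and the remark about $\theta=\pi/2$) that the paper leaves implicit.
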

\begin{proof}
The first identity follows from (\ref{abelsp}) by using the fact
that $f$ is even. The second identity follows from the first one by
using the change of variable $u=\cos r.$
\end{proof}
We end this section by quoting the following result from \cite{R3}
regarding the mapping property of $R_d^+.$ \bt If $1\leq p\leq
\infty$ and $1\leq d\leq n-1$ then for all continuous functions $f$
on $S^n,$ \be \|R_d^+f\|_{L^p(G_{n+1,d+1})}\leq
C\|f\|_{L^p(S^n)}.\label{ppsph}\ee \label{rubinsp} \et

\subsection[The $d\mbox{-}$Plane Transform on the Sphere]{The
$d\mbox{-}$Plane Transform of Radial Functions on the Sphere}

As in the previous section we are interested in the inequalities of
the form $$\|A^+_df\|_{L^q(G_{n+1,d+1})}\leq C\|f\|_{L^p(S^n)},$$
which should be valid for all $f\in C^{\infty}(S^n).$ Any $f\in
C^{\infty}(S^n)$ can be written as $f=f_1+f_2$ where
$f_1(x)=(f(x)+f(-x))/2$ is an even function and $f_2$ is an odd
function. Consequently $A^+_df=A^+_df_1$ (see Remark \ref{odd}). To
prove an inequality of the above form it thus suffices to prove an
inequality of the form $\|A^+_df_1\|_{L^q(G_{n+1,d+1})}\leq
C\|f_1\|_{L^p(S^n)},$ as
$$\|A^+_df\|_{L^q(G_{n+1,d+1})}=\|A^+_df_1\|_{L^q(G_{n+1,d+1})}\leq C\|f_1\|_{L^p(S^n)}\leq C\|f\|_{L^p(S^n)}.$$
So, from now onwards, we will deal only with nonnegative, even
functions on $S^n.$ We start with an example to show that, situation
here is different from that of $\R^n$ and $\mathbb H^n.$

\begin{Example} {\em We will show that $A_d^+$ is not bounded from $L^{p,1}(S^n)$ to $L^{\infty}(G_{n+1,d+1})$ if
$p<\infty$. Let
$$f_i(s)=\chi_{[0,a_i]}(s),\qquad 0<a_i<1, $$
and $\{a_i\}$ be a decreasing sequence converging to $0.$ If $a_i > \cos s$ then from
(\ref{abelsph1}) we get \be A_d^+f_i(s)=C(\cos s)^{1-d} \int_0^{\cos
s} (\cos^2s-t^2)^{\frac{d-2}{2}}\ dt.\label{afi}\ee It follows from (\ref{afi}) that the sequence
$\{\|A_d^+f_i\|_{L^{\infty}(G_{n+1,d+1})}\}$ is bounded away from
zero. On the other hand, \bea
\|\chi_{[0,a_i]}\|_{L^{p,1}(S^n)}&=&C \left(\int_0^{\pi/2} \chi_{[0,a_i]}(\cos s)\ \sin^{n-1} s\ ds\right)^{1/p}\nonumber\\
&=&C \left(\int_0^{1} \chi_{[0,a_i]}(t)\ (1-t^2)^{\frac{n-2}{2}}\ dt \right)^{1/p}\nonumber\\
&\leq &C\left(\int_0^{1} \chi_{[0,a_i]}(t)\ (1-t)^{\frac{n-2}{2}}\ dt \right)^{1/p}\nonumber\\
&=&C(1-(1-a_i)^{n/2})^{1/p}.\label{chai} \eea From (\ref{chai}) it
is clear that the sequence
$\{\|\chi_{[0,a_i]}\|_{L^{p,1}(S^n)}\}_{i=1}^{\infty}$ converges to
$0.$ This implies that Abel transform cannot be bounded from
$L^{p,1}(S^n)$ to $L^{\infty}(G_{n+1,d+1})$ if $p<\infty.$}
\label{examsphere}\end{Example}

The following theorem can be considered as an analogue of
(\ref{firstend}). \bt If $1\leq d \leq n-1$ then for all
non-negative $K$ invariant function $f$ on $S^n,$ there exists
 a positive constant $C$ such that
 \be
 \|\cos{(\cdot)}A_d^+f{(\cdot)}\|_{L^{\infty}(G_{n+1,d+1})} \leq C \|f\|_{L^{\frac{n}{d},1}(S^n)}.\label{endabelestsph}
 \ee
 \label{endabelsph}
 \et
 \begin{proof} As in Theorem \ref{endabel}, it is sufficient to prove the result for functions of the form $f(t)=\chi_{E}(t)$,
  where $E=\bigcup_{i=1}^{m} [a_i,b_i]$ ,$m \in \mathbb N$ and $0 \leq a_1 \leq  b_1 \leq a_2 \leq \cdots \leq a_l\leq b_l \leq 1.$
  We will first prove the result for the case $2 \leq d \leq n-1$.
 Using (\ref{abelsph2}) we write
 $$
 \cos s A_d^+f(s)=C \int_{s}^{\pi/2} f(\cos r) \left(1-\frac{\tan^2s}{\tan^2r}\right)^{\frac{d-2}{2}}\ \sin^{d-1} r\ dr .
 $$
 Since $s \leq r $, we have $0 \leq 1-\frac{\tan^2s}{\tan^2r} \leq 1$. Now from the above expression we have
 \bea
 \cos s A_d^+f(s)&\leq& C \int_{s}^{\pi/2} f(\cos r)\  \sin^{d-1} r\ dr\nonumber\\
 &\leq& C \int_{0}^{\pi/2} f(\cos r)\  r^{d-1}\  dr\nonumber
 \eea
  By Lemma \ref{ionescu} (see also Remark \ref{ioneskure}) we get
 \bea
 \cos s A_d^+f(s)&\leq& C \left(\int_{0}^{\pi/2} f(\cos r)\  r^{n-1}\  dr\right)^{d/n}\nonumber\\
 &\leq& C \left(\int_{0}^{\pi/2} f(\cos r)\  \sin^{n-1}r \ dr\right)^{d/n}\nonumber\\
 &\leq& C \|f\|_{L^{\frac nd,1}(S^n)}.\nonumber
 \eea
  We will now prove the case $d=1.$ From (\ref{abelsph1}) we have
 $$
 \cos s\ A_1^+f(s)=C \cos s \int_0^{\cos s} f(t)(\cos^2s-t^2)^{-\frac {1}{2}}\ dt.
 $$
 Using the change of variable $t=r \cos s$ we have
\bea
 \cos s\ A_1^+f(s)&=&C \cos s \int_0^{1} f(r \cos s)\ (1-r^2)^{-\frac{1}{2}}\
 dr,\nonumber\\
 &\leq & C \cos s \int_0^{1} f(r \cos s)\ (1-r)^{-\frac {1}{2}}\
 dr.\label{csa} \eea
 For $f(s)= \chi_{E}(s)$ let, $$I(s)=\cos s\ A_1^+(\chi_{E})(s).$$ Using
 (\ref{csa}) we get
\bea I(s) &\asymp&C
\begin{cases} 0,\quad\mbox{if }
\cos s <a_1.\\
\cos s \left[\ds \sum_{i=1}^{j-1}\int_{a_i/\cos s}^{b_i/\cos s}\ (1-r)^{-\frac 12}\ dr +\int_{a_j/\cos s}^1\ (1-r)^{-\frac 12}\ dr\right],\\
\qquad \mbox{if } a_j\leq \cos s <b_j,  1\leq j \leq m. \\
\cos s \left[\ds \sum_{i=1}^{j}\int_{a_i/\cos s}^{b_i/\cos s}\ (1-r)^{-\frac 12}\ dr \right],\\
\qquad\mbox{if }
b_j\leq \cos s<a_{j+1},1\leq j \leq {m-1}.\\
\cos s \left[\ds \sum_{i=1}^{m}\int_{a_i/\cos s}^{b_i/\cos s}\ (1-r)^{-\frac 12}\ dr \right],\qquad\mbox{if } b_m < \cos s < 1.\end{cases} \\
&=&C
\begin{cases} 0,\quad\mbox{if}
\cos s <a_1.\\
\cos s \left[\ds \sum_{i=1}^{j-1}\left(1-\frac{a_i}{\cos
s}\right)^{\frac 12}-\left(1-\frac{b_i}{\cos s}\right)^{\frac 12}
+(1-\frac{a_j}{\cos s})^{\frac 12}\right],\\
\qquad \mbox{if }a_j\leq \cos s <b_j,  1\leq j \leq m. \\
\cos s \left[\ds \sum_{i=1}^{j}\left(1-\frac{a_i}{\cos s}\right)^{\frac 12}-\left(1-\frac{b_i}{\cos s}\right)^{\frac 12} \right],\label{caf}\\
\qquad\mbox{if }
b_j\leq \cos s <a_{j+1},1\leq j \leq {m-1}.\\
\cos s \left[\ds \sum_{i=1}^{m}\left(1-\frac{a_i}{\cos
s}\right)^{\frac 12}-\left(1-\frac{b_i}{\cos s}\right)^{\frac 12}
\right], \qquad\mbox{if } b_m < \cos s < 1.\end{cases}\eea By Lemma
\ref{lemma} (with $\gamma=2$) we have \bea I^2(s)&\leq&C
\begin{cases} 0,\quad\mbox{if }
\cos s <a_1.\\
\cos^2 s \left[\ds \sum_{i=1}^{j-1}\left(\frac{b_i-a_i}{\cos s}\right) +\left(\frac{\cos s-a_j}{\cos s}\right)\right],\\
\qquad \mbox{if }a_j \leq \cos s <b_j,  1\leq j \leq m. \\
\cos^2 s \left[\ds \sum_{i=1}^{j}\left(\frac{b_i-a_i}{\cos s}\right)
\right], \qquad\mbox{if }
b_j\leq \cos s<a_{j+1},1\leq j \leq {m-1}.\\
\cos^2 s \left[\ds \sum_{i=1}^{m}\left(\frac{b_i-a_i}{\cos s}\right)
\right], \qquad\mbox{if } b_m < \cos s < 1.\end{cases}\eea Thus we
have from above expression that \be I^2(s)\ \leq \ C\ \ds
\sum_{i=1}^{m}(b_i-a_i)  \ee We define $A_i=1-a_i,\ B_i=1-b_i,\
i=1,2,\ldots,m.$ Then $$A_1 \geq B_1\geq A_2\geq B_2 \cdots \geq A_m
\geq B_m.$$ Again using the lemma \ref{lemma} (with $\gamma=n/2$) we
get
\bea I^2(s)&\leq& \ C \ds \sum_{i=1}^{m}(b_i-a_i)\nonumber\\
&=&\ C \ds \sum_{i=1}^{m}(A_i-B_i)\nonumber\\
&\leq&\ C \left[\ds \sum_{i=1}^{m}\left(A_i^{\frac n2}-B_i^{\frac n2} \right)\right]^{2/n}\nonumber\\
&=&\ C \left[\ds \sum_{i=1}^{m}\left\{(1-a_i)^{\frac n2}-(1-b_i)^{\frac n2} \right\}\right]^{2/n}\nonumber\\
&=&\ C \left(\int_0^1\chi_E(s)(1-s)^{\frac{n-2}{2}}ds\right)^{2/n}\nonumber\\
&\asymp&\ C \left(\int_0^1\chi_E(s)(1-s^2)^{\frac{n-2}{2}}\
ds\right)^{2/n}.\nonumber \eea Using the change of variable $\cos t
=s,$ it follows that \be I(s)\leq C \left(\int_0^{\pi/2} \chi_E(\cos
t) \sin^{n-1} t dt\right)^{1/n}= C \|\chi_E\|_{L^{n,1}(S^n)}.\ee
This completes the proof.
\end{proof}
The following corollary can be thought of as an analogue of
(\ref{secondend}).
\begin{Corollary}
There exists a constant $C>0$ such that for all $f\in
L^{\frac{n}{d},1}(S^n),$

\be \|A^+_df\|_{L^{d+1,\infty}(G_{n+1,d+1})}\leq
C\|f\|_{L^{\frac{n}{d},1}(S^n)}\label{weirdeqsp}.\ee\label{weirdcorsp}
\end{Corollary}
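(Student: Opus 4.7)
The plan is to deduce the Corollary directly from the pointwise bound established in Theorem \ref{endabelsph}, by comparing $A^+_df$ to an explicit reference function on $G_{n+1,d+1}$. Writing $M=\|f\|_{L^{n/d,1}(S^n)}$, Theorem \ref{endabelsph} gives $|A^+_df(\theta)|\leq CM/\cos\theta$ for $\theta\in(0,\pi/2)$, so by the monotonicity of the distribution function under pointwise majorization it suffices to show that the radial reference function $\phi(\theta)=1/\cos\theta$ lies in $L^{d+1,\infty}(G_{n+1,d+1})$ with norm depending only on $n$ and $d$.

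For this I first need the $SO(n+1)$-invariant measure on $G_{n+1,d+1}\simeq SO(n+1)/(SO(n-d)\times SO(d+1))$ restricted to functions that depend only on the bispherical parameter $\theta$. By the same Cartan-decomposition argument that produces Lemma \ref{xidm} in the hyperbolic case (with the boost $g_u$ replaced by the rotation sending $\xi_0$ to a $(d+1)$-plane making angle $\theta$ with $\xi_0$), this measure is a constant multiple of $\sin^{n-d-1}\theta\,\cos^d\theta\,d\theta$ on $(0,\pi/2)$ --- the spherical analogue of the weight $(\sinh u)^{n-d-1}(\cosh u)^d$ appearing in Lemma \ref{xidm}, and the same weight one sees in the bispherical formula (\ref{measurehypersp}).

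With the measure in hand, the weak-type norm of $\phi$ is a short computation. Using the substitution $u=\cos\theta$, for $\la\geq 1$
$$d_\phi(\la)=\mu\{\theta:\cos\theta<1/\la\}\leq C\int_0^{1/\la}u^d(1-u^2)^{\frac{n-d-2}{2}}\,du\leq C'\la^{-(d+1)},$$
while for $\la\in(0,1)$ the distribution function is controlled by the finite total measure of $G_{n+1,d+1}$. Hence $\sup_{\la>0}\la\,d_\phi(\la)^{1/(d+1)}<\infty$, so $\phi\in L^{d+1,\infty}(G_{n+1,d+1})$. Combining this with the pointwise bound and the inclusion $\{|A^+_df|>\la\}\subset\{\phi>\la/(CM)\}$ yields $\la\,\mu\{|A^+_df|>\la\}^{1/(d+1)}\leq CM\|\phi\|_{L^{d+1,\infty}}$, which is the inequality of the Corollary. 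The one step that is not mechanical is the first one, namely pinning down the explicit radial measure on $G_{n+1,d+1}$; the rest is just the weak-type analogue of the argument that gave (\ref{secondend}) in the hyperbolic setting.
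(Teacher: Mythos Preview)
Your proposal is correct and follows essentially the same route as the paper: from the pointwise bound $|A^+_df(\theta)|\leq C\|f\|_{L^{n/d,1}}/\cos\theta$ of Theorem \ref{endabelsph}, both you and the paper simply observe that the reference function $1/\cos\theta$ lies in $L^{d+1,\infty}$ with respect to the radial measure on $G_{n+1,d+1}$ (the paper states this with the slightly simpler majorizing weight $\cos^d\theta\,d\theta$, you use the full $\sin^{n-d-1}\theta\cos^d\theta\,d\theta$; either works since $\sin^{n-d-1}\theta\le1$). Your explicit distribution-function computation is just a fleshed-out version of the paper's one-line claim.
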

\begin{proof}
We have from (\ref{endabelestsph}) that
$\|A^+_df\|_{L^{\infty}(G_{n+1,d+1})}\leq
\frac{C}{\cos\theta}\|f\|_{L^{\frac{n}{d},1}(S^n)}.$ Since the
function $g(\theta)=\frac{1}{\cos\theta}$ belongs to the space
$L^{d+1,\infty}[0,\frac{\pi}{2}]$ with respect to the measure
$\cos^{d}\theta\ d\theta$ the result follows.
\end{proof}
\begin{Remark}{\em
i) Interpolating between the estimates (\ref{endabelestsph}) and
(\ref{ppsph}) (for $p=1$) we get the following weighted estimate
which is somewhat analogous to $\R^n$:
$$\|\cos (\cdot )A_d^+f\|_{L^q(G_{n+1,d+1})}\leq
C\|f\|_{L^p(S^n)},$$ where
$$1\leq p<\frac{n}{d},\qquad \frac{n}{p}=\frac{n-d}{q}+d.$$

\noindent ii) Corollary \ref{weirdcorsp} has an interesting
implication. Suppose that $d+1>\frac{n}{d}$, for instance, we can
choose $d=n-1,$ $n>2.$ We can now use an interpolation argument
involving the estimates $\|A^+_df\|_{L^1(G_{n+1,d+1})}\leq
C\|f\|_{L^1(S^n)}$ and (\ref{weirdeqsp}). As a result we can prove that for each
$p\in (1,\frac{n}{d})$ there exist a $q>p$ such that,
$$\|A^+_df\|_{L^q(G_{n+1,d+1})}\leq C\|f\|_{L^p(S^n)}.$$ This implies that for radial functions
Theorem \ref{rubinsp} is not best possible.}
\end{Remark}
\begin{Example}{\em
 We now construct an example to show that (\ref{endabelestsph}) is not possible, if $1\leq p <
\frac{n}{d}.$ We consider the sequence $\{a_m\}$ where
$a_m=({m-1})/({m+1}).$ Then $\{a_m\}$ is an increasing sequence with
$\lim_{m\rightarrow\infty} a_m =1.$ We define a sequence of
functions $\{\tilde{f_m}\}$ by defining
$\tilde{f_m}(s)=\chi_{[a_m,1]}(s).$ We now consider the sequence of
radial functions on $S^n$ defined by
$f_m(\rho\sin\theta+x_0\cos\theta)=\tilde{f_m}(\cos\theta), \ 0 \leq
\theta \leq \pi/2.$ For $\pi/2 < \theta < \pi,$ we define $f_m$ by
the relation $f_m(x)=f_m(-x).$ We have
\be\|f_m\|_{L^{p,1}(S^n)}=C\left(\int_0^{\pi/2} \chi_{[a_m,1]}(\cos
s)\ \sin^{n-1} s\ ds \right)^{1/p}\label{fm} \ee By the change of
variable $\cos s=t$ in (\ref{fm}), we get \bea
\|f_m\|_{L^{p,1}(S^n)}&=& \ C \left(\int_0^{1} \chi_{[a_m,1]}(t)\ (1-t^2)^{\frac{n-2}{2}}\ dt \right)^{1/p}\nonumber\\
&\asymp &C\left(\int_0^{1} \chi_{[a_m,1](t)}\ (1-t)^{\frac{n-2}{2}}\ dt \right)^{1/p}\nonumber\\
&= &C(1-a_m)^{\frac{n}{2p}}\nonumber\\
&= &C ({m+1})^{-\frac{n}{2p}}.\label{fmlp} \eea If
$\frac{(m+1)a_m}{m}<\cos s < \frac{ma_m}{m-1},$ then it follows from
the explicit expression of the Abel transform (see (\ref{abelsph1}))
that \bea \cos s A_d^+f_m(s)&=& \frac{C}{\cos^{d-1} s} \int_0^{\cos
s}f_m(t)(\cos^2 s -t^2)^{\frac{d-2}{2}}\ dt\nonumber\\
&=& \frac{C}{\cos^{d-1} s} \int_{a_m}^{\cos s}(\cos^2 s
-t^2)^{\frac{d-2}{2}}\ dt.\label{exestsph} \eea By substituting $t=r
\cos s$ in (\ref{exestsph}) we get
 \bea
\cos s A_d^+f_m(s)&=& C \int_{\frac{a_m}{\cos
s}}^{1}(1-r^2)^{\frac{d-2}{2}}\ dr\nonumber\\
&\geq& C \left(1-\frac{a_m}{\cos s}\right)^{\frac d2}\nonumber\\
&\geq & C\left(1- \frac{m}{m+1}\right)^{\frac d2}\nonumber\\
&=& C(m+1)^{-\frac d2}.\label{adflp} \eea It follows from
(\ref{fmlp}) and (\ref{adflp}) that an estimate of the form
 $$\|\cos(\cdot)A_d^{+}f(\cdot)\|_{L^{\infty}(G_{n+1,d+1})}\leq C \|f\|_{L^{p,1}(S^n)}$$
is possible only if $\frac {n}{2p} \leq \frac{d}{2},$ that is, $p
\geq\frac{n}{d}.$}
\end{Example}


\begin{thebibliography}{99}

\bibitem {BR1} Carlos A. Berenstein and Boris Rubin,
{\em Radon transform of $L^p$-functions on the Lobachevsky space and
hyperbolic wavelet transforms.} Forum Math. no.  {\bf 5},  11
(1999),  567--590.

\bibitem {BR2} Carlos A. Berenstein and Boris Rubin, {\em Totally geodesic Radon transform of $L^p$-functions on real
hyperbolic space.} Fourier analysis and convexity,  37--58,  Appl.
Numer. Harmon. Anal., Birkhuser Boston, Boston, MA,  (2004).

\bibitem {Bray} William O. Bray, {\em Aspects of harmonic analysis on real
hyperbolic space.}  Fourier analysis (Orono, ME, 1992),  77--102.

\bibitem {CMS} Michael Cowling, Stefano Meda and Alberto G. Setti,  {\em An overview of harmonic analysis on
the group of isometries of a homogeneous tree.}  Exposition. Math.
{\bf 16}  (1998),  no. 5,  385--423.

\bibitem {DNO} Javier Duoandikoetxea, Virginia Naibo and Osane Oruetxebarria, {\em $k$-plane transforms and
related operators on radial functions.} Michigan Math. J.  {\bf 49}
(2001),  265--276.

\bibitem {Graf} L. Grafakos, {\em Classical and modern Fourier analysis},  (2004),  Pearson Education, Inc.
NJ.

\bibitem{Go} Fulton B. Gonzalez, {\em Radon transform on Grassmann manifolds.} Journal of Func. Anal., {bf 71}  (1987), 339--362.

\bibitem {GK1} Fulton B. Gonzalez and Tomoyuki Kakehi, {\em Pfaffian systems and Radon transforms on affine Grassmann manifolds.}
 Math. Ann., {\bf 326} (2003), no. 2, 237--273.

\bibitem {GK2} Fulton B. Gonzalez and Tomoyuki Kakehi, {\em Dual Radon transforms on affine Grassmann manifolds.}
Trans. Amer. Math. Soc. {\bf 356} (2004), no. 10,  4161--4180

\bibitem {He1} Sigurdur Helgason, {\em The Radon transform on Euclidean spaces, compact two-point homogeneous spaces and Grassmann manifolds}. Acta Math., {\bf 113} (1965), 153--180.

\bibitem {He2} Sigurdur Helgason, {\em The Radon transform}. Second edition. Progress in
Mathematics,  {\bf 5}.  Birkhuser Boston, Inc., Boston, MA, (1999).

\bibitem {He3} Sigurdur Helgason, {\em Groups and geometric analysis. Integral geometry, invariant differential operators,
and spherical functions.} Mathematical Surveys and Monographs,  {\bf
83}.  American Mathematical Society, Providence, RI,  (2000).

\bibitem {Io} Alexandru D. Ionescu, {\em An endpoint estimate for the Kunze-Stein phenomenon and related maximal operators.}
  Ann. of Math.  (2)  {\bf 152}  (2000),  no. 1,  259--275.

\bibitem {Is} Satoshi Ishikawa, {\em The range characterizations of the totally geodesic Radon transform on the real
  hyperbolic space.}  Duke Math. J.  {\bf 90}  (1997),  no. 1,  149--203.

\bibitem {KR2} Ashisha Kumar and Swagato K. Ray, {\em Weighted estimates of $d\mbox{-}$plane transform for radial functions on
Euclidean spaces.} Israel J. Math. {\bf 188}  (2012),  no. 1,  25--56.

\bibitem{M} Andrew Markoe, {\em Analytic tomography.} Encyclopedia of Mathematics and its Applications,  {\bf
106}.  Cambridge University Press, Cambridge,  (2006).

\bibitem {OS} D. M. Oberlin and E. M. Stein, {\em Mapping properties of the Radon transform.}
Indiana Univ. Math. J.  {\bf 31}  (1982),  641--650.

\bibitem {Rj} John G. Ratcliffe, {\em Foundations of hyperbolic manifolds.} Second edition. Graduate Texts in Mathematics,
  {\bf 149}.  Springer, New York,  (2006).

\bibitem {RS} Swagato K. Ray and Rudra P. Sarkar, {\em Fourier and Radon transform on harmonic $NA$ groups.}
Trans. Amer. Math. Soc.  {\bf 361}  (2009),  no. 8, 4269--4297.

\bibitem {R3} Boris Rubin, {\em Inversion formulas for the spherical Radon transform and the generalized cosine
transform.}  Adv. in Appl. Math.  {\bf 29}  (2002),  no. 3,
471--497.

\bibitem {R4} Boris Rubin, {\em Radon, cosine and sine transforms on real hyperbolic space.} Adv. Math. {\bf 170} (2002), no. 2, 206-–223.

\bibitem {R5} Boris Rubin, {\em Radon transforms on affine Grassmannians.}  Trans.
Amer. Math. Soc.  {\bf 356}  (2004), no. 12, 5045--5070.

\bibitem {S} Donald C. Solmon, {\em A note on $k$-plane integral transforms.}
J. Math. Anal. Appl.  {\bf 71}  (1979),  351--358.

\bibitem{Str1} Robert S. Strichartz,  {\em Harmonic analysis on Grassmannian bundles.} Trans. of the Amer. Math. Soc.,
 {\bf 296}  (1986), 387--409.

\bibitem {Str2} Robert S. Strichartz, {\em $L^p$ estimates for Radon transforms in Euclidean and non-Euclidean spaces.}
  Duke Math. J.  {\bf 48}  (1981),  no. 4,  699--727.

\bibitem {SW3} E. M. Stein and G. Weiss, {\em Introduction to Fourier analysis on Euclidean spaces.}
Princeton Mathematical Series, No.  {\bf 32.}  Princeton University
Press, Princeton, N.J.,  (1971).

\bibitem {VK} N. Ja. Vilenkin and A. U. Klimyk,
{\em Representation of Lie groups and special functions. Vol. 2.
Class I representations, special functions, and integral
transforms.}
 Translated from the Russian by V. A. Groza and A. A. Groza.
 Mathematics and its Applications (Soviet Series),  {\bf 74}.  Kluwer Academic Publishers Group, Dordrecht,  (1993).


\end{thebibliography}
\end{document}